\documentclass[reqno,12pt]{amsart}
\usepackage{amsfonts}
\usepackage{bbm}
\usepackage{}
\setlength{\textheight}{23cm}
\setlength{\textwidth}{16cm}
\setlength{\oddsidemargin}{0cm}
\setlength{\evensidemargin}{0cm}
\setlength{\topmargin}{0cm}
\numberwithin{equation}{section}
\usepackage{color}
\usepackage{graphicx}
\usepackage{indentfirst}
\usepackage{color}
\usepackage{amssymb}
\usepackage{mathrsfs}
\usepackage{xy}
\usepackage{tikz}

\xyoption{all}
\def\Ext{\mbox{\rm Ext}\,} \def\Hom{\mbox{\rm Hom}} \def\dim{\mbox{\rm dim}\,} 
    \def\mod{\mbox{\rm \textbf{mod}}\,}
    
\def\cone{\mbox{\rm cone}}\def\cocone{\mbox{\rm cocone}}
\def\ind{\mbox{\rm ind}\,}\def\cim{\mbox{\rm sim}}\def\add{\mbox{\rm add}}
 
\def\Filt{\mbox{\rm \textbf{Filt}}}\def\supp{\mbox{\rm supp}}
\def\f-tors{\mbox{\rm f-tors}} \def\s\tau-tilt{\mbox{\rm s\tau-tilt}}
 \def\Binv{\mbox{\rm Binv}}\def\inv{\mbox{\rm inv}}\def\rep{\mbox{\rm rep}}\def\coker{\mbox{\rm coker}}

\theoremstyle{plain}
\newtheorem{theorem}{\bf Theorem}[section]
\newtheorem{lemma}[theorem]{\bf Lemma}
\newtheorem{corollary}[theorem]{\bf Corollary}
\newtheorem{proposition}[theorem]{\bf Proposition}

\theoremstyle{definition}
\newtheorem{definition}[theorem]{\bf Definition}
\newtheorem{remark}[theorem]{\bf Remark}
\newtheorem{example}[theorem]{\bf Example}

\newcommand{\bt}{\begin{theorem}}
\newcommand{\et}{\end{theorem}}
\newcommand{\bl}{\begin{lemma}}
\newcommand{\el}{\end{lemma}}
\newcommand{\bd}{\begin{definition}}
\newcommand{\ed}{\end{definition}}
\newcommand{\bc}{\begin{corollary}}
\newcommand{\ec}{\end{corollary}}
\newcommand{\bp}{\begin{proof}}
\newcommand{\ep}{\end{proof}}
\newcommand{\bx}{\begin{example}}
\newcommand{\ex}{\end{example}}
\newcommand{\br}{\begin{remark}}
\newcommand{\er}{\end{remark}}
\newcommand{\be}{\begin{equation}}
\newcommand{\ee}{\end{equation}}
\newcommand{\ba}{\begin{align}}
\newcommand{\ea}{\end{align}}
\newcommand{\bn}{\begin{enumerate}}
\newcommand{\en}{\end{enumerate}}
\newcommand{\bcs}{\begin{cases}}
\newcommand{\ecs}{\end{cases}}

\newcommand{\RNum}[1]{\uppercase\expandafter{\romannumeral #1\relax}}

%%%%%%%%%%%%%%%%%%%%%%%%%%%%%%%%%%%%%%%%%
\makeatletter
\renewcommand{\section}{\@startsection{section}{1}{0mm}
  {-\baselineskip}{0.5\baselineskip}{\bf\leftline}}
\makeatother

\begin{document}
\title[semibricks, torsion-free classes and the Jordan-H\"{o}lder property]{semibricks, torsion-free classes and\\ the Jordan-H\"{o}lder property}
\author[L. Wang, J. Wei, H. Zhang, P. Zhang]{Li Wang, Jiaqun Wei, Haicheng Zhang, Peiyu Zhang}
\address{School of Mathematical and Physics, Anhui Polytechnic University, Wuhu, 241000, P. R. China. \endgraf}
\email{wl04221995@163.com (L. Wang); zhangpy@ahpu.edu.cn.(P. Zhang).}
\address{Institute of Mathematics, School of Mathematical Sciences, Nanjing Normal University,
 Nanjing 210023, P. R. China.\endgraf}
\email{weijiaqun@njnu.edu.cn (J. Wei); zhanghc@njnu.edu.cn (H. Zhang).}

%\thanks{$\ast$: Corresponding author. The authors are supported by the Natural Science Foundation of China (Grant No.11471269; No.211070B31704).}
\subjclass[2010]{16G10, 18E05, 18E10.}
\keywords{Semibricks; Torsion-free classes; $c$-sortable elements; Jordan-H\"{o}lder property}

\begin{abstract}
Let $\mathscr{C}$ be an extriangulated category and
$\mathcal{X}$ be a  semibrick in $\mathscr{C}$. Let $\mathcal{T}$ be the filtration subcategory generated by $\mathcal{X}$. We introduce the weak Jordan-H\"{o}lder property {(WJHP)} and Jordan-H\"{o}lder property {(JHP)} in $\mathscr{C}$ and show that $\mathcal{T}$ satisfies {(WJHP)}. Furthermore, $\mathcal{T}$ satisfies {(JHP)} if and only if $\mathcal{X}$ is proper. Using reflection functors and $c$-sortable elements, we give a combinatorial criterion for the torsion-free class satisfying  {(JHP)} in the representation category of a quiver of type $A$.\end{abstract}

\maketitle

%%%%%%%%%%%%%%%%%%%%%%%%%%%%%%%%%%%%%%%%%%%%%%%%%%%%%%%%%%%%%%%%%%%%%%%%%%%%%%%%%%%%%%%%%%%%%%%%%%%%%%%%%%%%%%%%%%%%%%%%%%%%%%%%%%%%%%%%%%%%%%%%%%%%%%%%
\section{Introduction}
It is well-known that the category of modules with finite length over a ring satisfies the {Jordan-H\"{o}lder property}, abbreviated by (JHP), i.e., for any module of finite length, all its composition series are equivalent (cf. \cite[Theorem I.1.2]{Aus}). It is also known that an abelian category satisfies (JHP) if every object has finite length (cf. \cite{St}). A natural question is to investigate to what extent this property is valid in various categories.

Clearly, the notions of {simple objects}, {composition series} and {lengths} of objects are fundamental in studying (JHP). These notions for exact categories were introduced in \cite{BHL}. In \cite{Br}, the notions of {diamond exact categories} and {Artin-Wedderburn exact categories} were introduced and it was proved that these categories satisfy (JHP). However, there exist many exact categories which do not satisfy (JHP), this is mainly because the (length of) composition series may not be unique (cf. \cite{En2}).
Enomoto \cite{En2} studied (JHP) in exact categories, which serve as a useful categorical framework for studying various subcategories of module categories. The notion of {Grothendieck monoids} of exact categories was introduced by Berenstein and Greenstein in \cite{Be} with aim to study Hall algebras of exact categories. Enomoto \cite{En2} used the Grothendieck monoids to give some simple characterizations of (JHP) in exact categories. In particular, he showed that a skeletally small exact category $\mathcal {E}$ with a finitely generated Grothendieck group $K_0(\mathcal {E})$ satisfies (JHP) if and only if $K_0(\mathcal {E})$ is a free abelian group with rank equal to the number of non-isomorphic simple objects in $\mathcal {E}$.

According to Schur's lemma, for any morphism $f$ between two simple objects in an abelian category, $f$ is either zero or an isomorphism. In other words, the set of simple objects in an abelian category forms a semibrick. We recall that an object in an additive category $\mathcal{C}$ is called a {brick}, if its endomorphism ring is a division ring; A set $\mathcal{X}$ of isoclasses (isomorphism classes) of bricks in $\mathcal{C}$ is called a {semibrick} if $\Hom_{\mathcal{C}}(X_1,X_2)=0$ for any two non-isomorphic objects $X_1,X_2$ in $\mathcal{X}$. Schur's lemma has been generalised to exact categories in \cite{En1}. Given an exact category $\mathcal {E}$, a {wide subcategory} $\mathcal {W}$ is an extension-closed exact abelian subcategory of $\mathcal {E}$. In addition, if $\mathcal {W}$ is a length abelian category, then it is called a {length wide subcategory}
of $\mathcal {E}$. By considering the {filtration subcategories}, Enomoto \cite{En1} showed that there exists a bijection between semibricks and length wide subcategories in $\mathcal {E}$, which is a generalization of the classical result of Ringel \cite{Ringel}.
Recently, Nakaoka and Palu \cite{Na} introduced an extriangulated category by extracting properties on triangulated categories and exact categories. The bijection between semibricks and length wide subcategories was generalised to extriangulated categories in \cite{Wa}, where some properties of the filtration subcategory $\Filt_{\mathscr{C}}(\mathcal{X})$ generated by a semibrick $\mathcal{X}$ in an extriangulated category $\mathscr{C}$ have been given.

It is interesting to investigate (JHP) in extriangulated categories.
Note that for an abelian category $\mathcal{A}$ with the set $\cim(\mathcal{A})$ of isoclasses of simple objects, $\mathcal{A}$ satisfies (JHP) if and only if $\mathcal{A}=\Filt_{\mathcal{A}}(\cim(\mathcal{A})$. The proof is by means of {intersection} or {lattice-theoretic} (cf. \cite{Br}, \cite[p.92]{St}). Clearly, $\cim(\mathcal{A})$ is a semibrick. Therefore, we intend to investigate (JHP) in the filtration subcategories generated by semibricks in extriangulated categories.
In fact, we introduce the weak Jordan-H\"{o}lder property for extriangulated categories, abbreviated by (WJHP). We firstly prove that the filtration subcategory $\mathcal{T}=\Filt_{\mathscr{C}}(\mathcal{X})$ generated by any semibrick $\mathcal{X}$ satisfies (WJHP), furthermore, $\mathcal{T}$ satisfies (JHP) if and only if $\mathcal{X}$ is a proper semibrick.

The torsion-free classes in the representation theory of algebras are central and have relations with the tilting theory. Enomoto \cite {En3} classified all torsion-free classes in any length abelian categories by using bricks, and in \cite{En2} he also gave a criterion whether a functorially finite torsion-free class satisfies (JHP) by using the language of $\tau$-tilting theory. Moreover, by using the combinatorics of the symmetric group, Enomoto \cite{En2} investigated the simple objects and (JHP) in a torsion-free class of the category $\rep(Q)$ of representations of a quiver $Q$ of type $A$.
He proved that if $Q$ is linearly oriented, then every torsion-free class in $\rep(Q)$ satisfies (JHP). An interesting question is whether there is a formula to count the number of torsion-free classes satisfying  (JHP) in the representation category of any $A$-type quiver $Q$ (cf. \cite[Problem9.7]{En2}). To do this, it is essential to give a simple criterion for the torsion-free classes satisfying (JHP) in $\rep(Q)$. Using reflection functors and $c$-sortable elements, we give a combinatorial criterion.

The paper is organized as follows: We recall some definitions and properties of extriangulated categories and filtration subcategories in Section 2. In Section 3,  we
introduce (WJHP), (JHP) and proper semibricks in  extriangulated categories, and show that the filtration subcategory $\mathcal{F}(\mathcal{X})$ generated by a semibrick $\mathcal{X}$ satisfies (WJHP). Furthermore, $\mathcal{F}(\mathcal{X})$ satisfies (JHP) if and only if $\mathcal{X}$ is proper. Finally, we give a combinatorial criterion for the torsion-free classes satisfying (JHP) in the representation category of a quiver of type $A$ in Section 4.

Throughout this paper, we assume, unless otherwise stated, that all considered categories are skeletally small, Krull--Schmidt and subcategories are {full} and closed under isomorphisms. For an additive category $\mathscr{C}$, we denote by ${\rm ind}(\mathscr{C})$ a complete set of indecomposable objects in $\mathscr{C}$; for a set $\mathcal {X}$ of objects in $\mathscr{C}$, we denote by ${\rm add}\mathcal {X}$ the subcategory of $\mathscr{C}$ consisting of direct summands of finite direct sums of objects in $\mathcal {X}$.  Let $k$ be a field and $Q$ be a finite acyclic quiver, for a $k$-representation $M$ of $Q$, we denote by $M_i$ the vector space corresponding to the vertex $i$; we denote by $\rep(Q)$ the category of finite dimensional representations of $Q$, which is identified with the category $\mod kQ$ of finite dimensional left $kQ$-modules; we denote by $S_i$ the one-dimensional simple representation associated to the vertex $i$ and denote by $P_i$ and $I_i$ the projective cover and injective envelope of $S_i$, respectively. For a finite dimensional algebra $\Lambda$,  we denote by $D^b(\Lambda)$ its bounded derived category.

\section{Preliminaries}
Let us recall some notations and properties of extriangulated categories and filtration subcategories from \cite{Na} and \cite{Wa}, respectively.

\subsection{Extriangulated categories}
An extriangulated category is
a datum consisting of an
additive category $\mathscr{C}$, a bifunctor $\mathbb{E}:\mathscr{C}^{\rm op}\times \mathscr{C}\rightarrow {\bf Ab}$ and an additive realization $\mathfrak{s}$
that essentially defines the class of conflations, which satisfies certain axioms
that simultaneously generalize axioms of exact categories and triangulated categories, denoted by (ET1)-(ET4), (ET3)$^{\rm op}$ and (ET4)$^{\rm op}$ (see \cite[Definition 2.12]{Na}).
Exact categories, triangulated categories and extension-closed subcategories of triangulated categories are
extriangulated categories.

Let $\mathfrak{s}$ be an additive realization of $\mathbb{E}$. If $\mathfrak{s}(\delta)=[A\stackrel{x}{\longrightarrow}B\stackrel{y}{\longrightarrow}C]$, then the sequence $A\stackrel{x}{\longrightarrow}B\stackrel{y}{\longrightarrow}C$ is called a {\em conflation}, $x$ is called an {\em inflation} and $y$ is called a {\em deflation}. In this case, we say that $A\stackrel{x}{\longrightarrow}B\stackrel{y}{\longrightarrow}C\stackrel{\delta}\dashrightarrow$ is an $\mathbb{E}$-triangle. We will write $A=\cocone(y)$ and $C=\cone(x)$ if necessary.  For more details, we refer the reader to \cite[Section 2]{Na}.

In what follows, we always assume $\mathscr{C}$ is an extriangulated category.
\subsection{ Filtration subcategories}
Let $\mathcal{X}$ be a class of objects in $\mathscr{C}$. The {\em filtration subcategory} $\mathbf{Filt_{\mathscr{C}}(\mathcal{X})}$ is consisting of all objects $M$ admitting a finite filtration of the form
\begin{equation*}
0=M_{0}\stackrel{f_{0}}{\longrightarrow}M_{1}\stackrel{f_{1}}{\longrightarrow}M_{2}{\longrightarrow}\cdots\stackrel{f_{n-1}}{\longrightarrow}M_{n}=M
\end{equation*}
with $f_{i}$ being an inflation and $\cone(f_{i})\in\mathcal{X}$ for any $0\leq i\leq n-1$. In this case, we say that $M$ possesses an $\mathcal{X}$-{\em filtration} of length $n$ and write $M\sim(\cone(f_{0}),\cdots,\cone(f_{n-1}))_{\mathcal{X}}$.
For each object $M\in\Filt_{\mathscr{C}}(\mathcal{X})$, the minimal length of $\mathcal{X}$-{filtrations} of $M$ is called the $\mathcal{X}$-{\em length} of $M$, which is denoted by $l_{\mathcal{X}}(M)$.

An object in an additive category $\mathcal{C}$ is called a {\em brick}, if its endomorphism ring is a division ring. A set $\mathcal{X}$ of isoclasses of bricks in $\mathcal{C}$ is called a {\em semibrick} if $\Hom_{\mathcal{C}}(X_1,X_2)=0$ for any two non-isomorphic objects $X_1,X_2$ in $\mathcal{X}$. The following result will be frequently used in what follows, see \cite{Wa}, Lemmas 2.8, 2.9, 3.5, 5.4.
\begin{proposition}\label{2-1} Let $\mathcal{X}$ be a semibrick in $\mathscr{C}$.

$(1)$ $\mathbf{Filt_{\mathscr{C}}(\mathcal{X})}$ is the smallest extension-closed subcategory containing $\mathcal{X}$ in $\mathscr{C}$.

$(2)$ For each object $M\in\mathbf{Filt_{\mathscr{C}}(\mathcal{X})}$, there exists two $\mathbb{E}$-triangles $X_{1}\stackrel{}{\longrightarrow}M\stackrel{}{\longrightarrow}M'\stackrel{}\dashrightarrow$ and $M''\stackrel{}{\longrightarrow}M\stackrel{}{\longrightarrow}X_{2}\stackrel{}\dashrightarrow$ with $X_{1},X_{2}\in\mathcal{X}$ and $l_{\mathcal{X}}(M')=l_{\mathcal{X}}(M'')=l_{\mathcal{X}}(M)-1$.

$(3)$ If $f:X\rightarrow M$ is a non-zero morphism in $\mathbf{Filt_{\mathscr{C}}(\mathcal{X})}$ with $X\in\mathcal{X}$, then $f$ is an inflation and $l_{\mathcal{X}}(\cone(f))=l_{\mathcal{X}}(M)-1$.

$(4)$ $\mathbf{Filt_{\mathscr{C}}(\mathcal{X})}$ is closed under direct summands.

$(5)$ If $A=B\oplus C\in\mathbf{Filt_{\mathscr{C}}(\mathcal{X})}$, then $l_{\mathcal{X}}(A)=l_{\mathcal{X}}(B)+l_{\mathcal{X}}(C)$.
\end{proposition}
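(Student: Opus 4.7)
The plan is to prove the five items in order, each building on its predecessors and leveraging the semibrick property of $\mathcal{X}$ in the spirit of Schur's lemma. For (1), I would first note that every $X\in\mathcal{X}$ has the trivial filtration $0\to X$ of length one, so $\mathcal{X}\subseteq\Filt_{\mathscr{C}}(\mathcal{X})$. Extension-closedness is immediate: given an $\mathbb{E}$-triangle $A\to B\to C\dashrightarrow$ with $A,C\in\Filt_{\mathscr{C}}(\mathcal{X})$, pick filtrations of $A$ and $C$ and splice them using the inflation $A\to B$ as a bridge, with composition series of $C$ pulled back via (ET4) to refine the filtration of $B$. Minimality is then automatic: any extension-closed subcategory containing $\mathcal{X}$ must contain each middle term of any $\mathcal{X}$-filtration by repeated use of its defining conflations.

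For (2), start with a minimal filtration of length $n=l_{\mathcal{X}}(M)$: its first step already exhibits an inflation $X_1:=M_1\to M$ with $X_1\in\mathcal{X}$ and cone of length at most $n-1$; equality holds, else pasting a shorter filtration back with $X_1$ would contradict minimality. The second $\mathbb{E}$-triangle is obtained dually by peeling off the last conflation in the filtration. Item (3) is where the semibrick hypothesis really enters, and I would argue by induction on $l_{\mathcal{X}}(M)$. The base case $M\in\mathcal{X}$ uses the semibrick axiom directly: a non-zero $f:X\to M$ between bricks in $\mathcal{X}$ is an isomorphism, hence an inflation with zero cone. In the inductive step, choose the $\mathbb{E}$-triangle $M''\to M\to X_2\dashrightarrow$ from (2); if the composition $X\to M\to X_2$ is non-zero it is an isomorphism (so the triangle splits and the conclusion is immediate), otherwise $f$ factors through $M''$, and (ET4) upgrades this factorisation to an inflation $X\to M$ whose cone fits into a conflation with $X_2$ of total length $l_{\mathcal{X}}(M)-1$.

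The main obstacle is (4), closure of $\Filt_{\mathscr{C}}(\mathcal{X})$ under direct summands. I would proceed by induction on $l_{\mathcal{X}}(A)$: given $A=B\oplus C$, use (2) to pick an $\mathbb{E}$-triangle $X_1\to A\to A'\dashrightarrow$ with $X_1\in\mathcal{X}$. The compositions $X_1\to A\twoheadrightarrow B$ and $X_1\to A\twoheadrightarrow C$ cannot both vanish; whichever is non-zero is, by (3), an inflation, and the Krull--Schmidt hypothesis allows one to split $X_1$ off the corresponding summand. A diagram chase then realises $A'$ as a direct sum compatible with the decomposition $B\oplus C$, and induction finishes the job. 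This coherence step---making the two induced decompositions of $A'$ fit together---is the genuinely delicate point, and is where both the Krull--Schmidt property and the fine structure of $\mathbb{E}$-triangles are truly used. Once (4) is established, (5) follows by an easy double induction: concatenating filtrations of $B$ and $C$ yields $l_{\mathcal{X}}(A)\leq l_{\mathcal{X}}(B)+l_{\mathcal{X}}(C)$, while the reverse inequality is obtained by stripping off one brick at a time via (2) and using (4) to keep the complementary summand inside $\Filt_{\mathscr{C}}(\mathcal{X})$.
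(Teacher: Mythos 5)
The paper itself gives no proof of this proposition: it is quoted verbatim from \cite{Wa} (Lemmas 2.8, 2.9, 3.5, 5.4), so your attempt can only be judged on its own merits. Your treatment of (1) and (2) is fine (both hold for an arbitrary class $\mathcal{X}$, by splicing filtrations via (ET4) and peeling off the first/last conflation of a minimal filtration), and your induction for (3) is essentially the standard one: in the split case the cone of $f$ is identified with the object $M''$ of known length $l_{\mathcal{X}}(M)-1$, and in the case $h_2f=0$ one factors through $M''$ and composes inflations. So far, so good.

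The genuine gap is in (4), and it propagates to (5). First, you apply (3) to the composite $X_1\to A\twoheadrightarrow B$; but (3) is a statement about morphisms \emph{in} $\Filt_{\mathscr{C}}(\mathcal{X})$, and $B$ is exactly the object you are trying to show lies in $\Filt_{\mathscr{C}}(\mathcal{X})$ — as written the step is circular (one must instead work with maps into $A$ itself, e.g.\ the composite $X_1\to A\to B\to A$, or run a more careful induction). Second, the claim that the two components ``cannot both vanish'' is not free in an extriangulated category: inflations can be zero (e.g.\ $X\stackrel{0}{\longrightarrow}0\longrightarrow X[1]$ in a triangulated category, which is precisely the situation of Lemma 3.6 of this paper), and excluding $\iota=0$ requires comparing $l_{\mathcal{X}}(A)$ with $l_{\mathcal{X}}(A\oplus TX_1)$, i.e.\ the additivity statement (5) for an object of smaller length — so (4) and (5) must be proved by a simultaneous induction, whereas you derive (5) only after (4). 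Third, the ``coherence step'' you flag — realising $A'$ as a direct sum compatible with $A=B\oplus C$ after splitting a brick off one summand — is the actual mathematical content of (4); Krull--Schmidt alone does not provide it (the naive trick of normalising $\binom{\iota_B}{\iota_C}$ to $\binom{\iota_B}{0}$ by an automorphism of $B\oplus C$ needs $\iota_C$ to factor through $\iota_B$, which has no reason to hold), and no argument is offered. Since (4) and (5) are precisely the nontrivial items being asserted, the proposal as it stands does not prove them; for a complete argument one should consult or reproduce the proof of Lemma 5.4 in \cite{Wa}.
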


\section{Jordan-H\"{o}lder property}
We denote by $[X_{1},\cdots, X_{n}]$ the sequence of the objects $X_{1},\cdots, X_n$ in $\mathscr{C}$. For any two sequences $[X_{1},\cdots, X_{n}]$ and $[Y_{1},\cdots, Y_{m}]$ of objects in $\mathscr{C}$, they are said to be {\em isomorphic}, denoted by $[X_{1},\cdots, X_{n}]\cong[Y_{1},\cdots, Y_{m}]$, if $n=m$ and there exists a permutation $\sigma$ of the set $\{1,2,\cdots,n\}$ such that $X_{i}\cong Y_{\sigma(i)}$ for any $1\leq i\leq n$.

\begin{definition}\label{cs}
We say that the extriangulated category $\mathscr{C}$ satisfies the {\em weak Jordan-H\"{o}lder property}, abbreviated by (WJHP), if the following conditions hold:

$(1)$ There exists a set $\mathcal{X}$ of objects in $\mathscr{C}$ such that $\mathscr{C}=\mathbf{Filt_{\mathscr{C}}(\mathcal{X})}$.

$(2)$ For each object $M\in\mathscr{C}$ with $l_{\mathcal{X}}(M)=n$, all $\mathcal{X}$-filtrations of $M$ with length $n$ are {\em isomorphic} to each other. That is, if $M\sim(X_{1},\cdots, X_{n})_{\mathcal{X}}$ and $M\sim(Y_{1},\cdots, Y_{n})_{\mathcal{X}}$, then $[X_{1},\cdots, X_{n}]\cong[Y_{1},\cdots, Y_{n}]$.
In addition, if for any object $M\in\mathscr{C}$, all $\mathcal{X}$-filtrations of $M$ have the same length, then we say that $\mathscr{C}$ satisfies the {\em Jordan-H\"{o}lder property}, abbreviated by (JHP).
\end{definition}

As in the module categories, we can define simple objects, composition series in extriangulated categories. A non-zero object $M$ in $\mathscr{C}$ is called a {\em simple} object if there does not exist an $\mathbb{E}$-triangle $A\stackrel{}{\longrightarrow}M\stackrel{}{\longrightarrow}B\stackrel{}\dashrightarrow$ in $\mathscr{C}$ such that $A,B\neq0$. Let $\cim(\mathscr{C})$ be the set of isoclasses of simple objects in $\mathscr {C}$. However, $\cim(\mathscr {C})$ could be an empty set, i.e., there exist no simple objects in $\mathscr{C}$.

Assume that $\cim(\mathscr{C})\neq\emptyset$.
Given an object $M$ in $\mathscr{C}$, a finite filtration
\begin{equation*}
0=M_{0}\stackrel{f_{0}}{\longrightarrow}M_{1}\stackrel{f_{1}}{\longrightarrow}M_{2}{\longrightarrow}\cdots\stackrel{f_{n-1}}{\longrightarrow}M_{n}=M
\end{equation*}
is called a {\em composition series} of $M$ if each $f_{i}$ is an inflation and $\cone(f_{i})\in\cim(\mathscr{C})$ for any $0\leq i\leq n-1$.
\begin{proposition}\label{JHP}
Let $\mathscr{C}$ be an extriangulated category. Then $\mathscr{C}$ satisfies ${\rm (JHP)}$ if and only if the following conditions hold.

$(1)$ Every object in $\mathscr{C}$ has at least one composition series. That is, $\mathscr{C}=\mathbf{Filt_{\mathscr{C}}(\cim(\mathscr{C})})$.

$(2)$ For each object $M\in\mathscr{C}$,  all composition series of $M$ are isomorphic to each other.
\end{proposition}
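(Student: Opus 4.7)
The plan is to show that, in both directions, $\cim(\mathscr{C})$ serves as the set $\mathcal{X}$ appearing in Definition \ref{cs}. For sufficiency one simply takes $\mathcal{X}=\cim(\mathscr{C})$ as the witness; for necessity one proves that every set $\mathcal{X}$ that witnesses (JHP) must already coincide with $\cim(\mathscr{C})$. Once this identification is secured, composition series are exactly $\cim(\mathscr{C})$-filtrations, and the two clauses of the proposition match the two clauses of the definition of (JHP).

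For sufficiency, I would insert $\mathcal{X}=\cim(\mathscr{C})$ into Definition \ref{cs}. Hypothesis (1) is part (1) of that definition verbatim. Hypothesis (2), saying that all composition series of $M$ are pairwise isomorphic, simultaneously forces them to share the same length and makes any two of the same length isomorphic, so both clauses of (JHP) are verified at once.

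For necessity, fix any $\mathcal{X}$ that witnesses (JHP). First I would show $\mathcal{X}\subseteq\cim(\mathscr{C})$: if some $X\in\mathcal{X}$ admitted a nontrivial $\mathbb{E}$-triangle $A\to X\to B\dashrightarrow$ with $A,B\neq 0$, then $A,B\in\Filt_{\mathscr{C}}(\mathcal{X})$ would admit $\mathcal{X}$-filtrations of positive length (since $l_{\mathcal{X}}(M)=0\Leftrightarrow M=0$), and splicing them along the conflation would produce an $\mathcal{X}$-filtration of $X$ of length at least $2$; together with the trivial length-one filtration $(X)_{\mathcal{X}}$, this contradicts the uniform-length clause of (JHP). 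Consequently $\mathscr{C}=\Filt_{\mathscr{C}}(\mathcal{X})\subseteq\Filt_{\mathscr{C}}(\cim(\mathscr{C}))\subseteq\mathscr{C}$, which is condition (1). Next I would show $\cim(\mathscr{C})\subseteq\mathcal{X}$: given a simple $S$ and any minimal-length $\mathcal{X}$-filtration $0=S_0\to\cdots\to S_l=S$, if $l\geq 2$ then the final step $S_{l-1}\to S\to \cone(f_{l-1})\dashrightarrow$ is a nontrivial $\mathbb{E}$-triangle with $S_{l-1}\neq 0$ (otherwise $0\to S$ would give a shorter filtration) and $\cone(f_{l-1})\neq 0$, contradicting simplicity. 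Hence $l=1$ and $S\in\mathcal{X}$. With $\mathcal{X}=\cim(\mathscr{C})$ now in hand, condition (2) is exactly the isomorphism clause of (JHP) applied to this $\mathcal{X}$.

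The main obstacle I anticipate is the splicing step used to establish $\mathcal{X}\subseteq\cim(\mathscr{C})$: given an $\mathbb{E}$-triangle $A\to X\to B\dashrightarrow$ and $\mathcal{X}$-filtrations of $A$ and $B$, produce an $\mathcal{X}$-filtration of $X$ of length $l_{\mathcal{X}}(A)+l_{\mathcal{X}}(B)$. When $\mathcal{X}$ is a semibrick this is covered by Proposition \ref{2-1}(1), but the witness $\mathcal{X}$ for (JHP) is not a priori a semibrick, so the construction must be done directly from the extriangulated axioms: iteratively pull the $B$-filtration back along the deflation $X\to B$ (using that pullbacks of deflations along arbitrary morphisms are again deflations) to extend the $A$-filtration from $A$ up to $X$, and verify at each stage that the resulting map is an inflation with cone still in $\mathcal{X}$.
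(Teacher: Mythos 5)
Your proposal is correct and follows essentially the same route as the paper: sufficiency by taking $\mathcal{X}=\cim(\mathscr{C})$ in Definition \ref{cs}, and necessity by showing any witness $\mathcal{X}$ of {\rm (JHP)} must coincide with $\cim(\mathscr{C})$, comparing the length-one filtration of $X\in\mathcal{X}$ with the filtration spliced along a conflation $A\to X\to B$. The splicing fact you flag as the main obstacle is exactly what the paper invokes from the proof of \cite[Lemma 2.7]{Wa}, which holds for an arbitrary class $\mathcal{X}$ (not just semibricks), so no extra work is needed there.
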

\begin{proof}
Clearly, we just need to prove the ``only if" part. Suppose that $\mathscr{C}$ satisfies ${\rm (JHP)}$.
By Definition \ref{cs}, there exists a set $\mathcal{X}$ of objects in $\mathscr{C}$ such that $\mathscr{C}=\mathbf{Filt_{\mathscr{C}}(\mathcal{X})}$.
Now, we need to prove that $\mathcal{X}=\cim(\mathscr{C})$. For any $\mathbb{E}$-triangle $A\stackrel{}{\longrightarrow}B\stackrel{}{\longrightarrow}C\stackrel{}\dashrightarrow$,  there exists an $\mathcal{X}$-filtration  of $B$ with length $l_{\mathcal{X}}(A)+l_{\mathcal{X}}(C)$ (see the proof of \cite[Lemma 2.7]{Wa}). Since $\mathscr{C}$ satisfies {\rm (JHP)}, all $\mathcal{X}$-filtrations of $B$ have the same length, we get that $l_{\mathcal{X}}(B)=l_{\mathcal{X}}(A)+l_{\mathcal{X}}(C)$. So for any object $X\in\mathcal{X}$, suppose that there exists an $\mathbb{E}$-triangle $A\stackrel{}{\longrightarrow}X\stackrel{}{\longrightarrow}C\stackrel{}\dashrightarrow$, then $l_{\mathcal{X}}(X)=l_{\mathcal{X}}(A)+l_{\mathcal{X}}(C)$. While $l_{\mathcal{X}}(X)=1$, thus, either $A=0$ or $C=0$, i.e., $X\in\cim(\mathscr{C})$. On the other hand, $\cim(\mathscr {C})\subseteq\mathscr{C}=\mathbf{Filt_{\mathscr{C}}(\mathcal{X})}$, by the definition of simple objects, we get that $\cim(\mathscr {C})\subseteq\mathcal{X}$. Therefore, $\mathcal{X}=\cim(\mathscr{C})$.
\end{proof}

\begin{lemma}\label{length} Let $\mathcal{X}$ be a class of objects in $\mathscr{C}$, $M\in\Filt_{\mathscr{C}}(\mathcal{X})$ with $l_{\mathcal{X}}(M)=n$. Given an $\mathcal{X}$-filtration $$0=M_{0}\stackrel{f_{0}}{\longrightarrow}M_{1}\stackrel{f_{1}}{\longrightarrow}M_{2}\stackrel{f_{2}}{\longrightarrow}\cdots\stackrel{f_{n-1}}{\longrightarrow}M_{n}=M.$$

$(1)$ $l_{\mathcal{X}}(M_{i})=i$ for $0\leq i\leq n$.

$(2)$ $l_{\mathcal{X}}(\cone(f_{j}f_{j-1}\cdots f_{i}))=j-i+1$ for $0\leq i\leq j\leq n-1$.

$(3)$ Set $f=f_{n-1}\cdots f_{1}$. Then $l_{\mathcal{X}}(\cone(f))=n-1$ and $\cone(f)\sim(\cone(f_{1}),\cdots, \cone(f_{n-1}))_{\mathcal{X}}$.
\end{lemma}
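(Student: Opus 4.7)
The plan is to establish the octahedral content of (3) first, use it as the main bounding tool, and then squeeze the equalities in (1) and (2) between upper bounds read off from filtrations and lower bounds obtained by comparing total $\mathcal{X}$-lengths across $\mathbb{E}$-triangles.

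\emph{Step 1 (auxiliary).} For $0\leq i\leq j\leq n-1$ set $g_{i,j}:=f_j f_{j-1}\cdots f_i\colon M_i\to M_{j+1}$. I claim that $g_{i,j}$ is an inflation and that $\cone(g_{i,j})$ admits the $\mathcal{X}$-filtration $\bigl(\cone(f_i),\cone(f_{i+1}),\ldots,\cone(f_j)\bigr)_{\mathcal{X}}$. This is proved by induction on $j-i$: the base $j=i$ is tautological, and in the inductive step I apply (ET4) to the composable pair $g_{i,j-1}$ and $f_j$ to obtain an $\mathbb{E}$-triangle
\[
\cone(g_{i,j-1})\longrightarrow \cone(g_{i,j})\longrightarrow \cone(f_j)\dashrightarrow
\]
in which the left arrow is an inflation; splicing the inductive $\mathcal{X}$-filtration of $\cone(g_{i,j-1})$ with the new step $\cone(f_j)\in\mathcal{X}$ produces the asserted filtration. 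In particular $l_{\mathcal{X}}(\cone(g_{i,j}))\leq j-i+1$, and the explicit filtration claim in (3) is already established.

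\emph{Step 2 (part (1)).} I induct on $i$, the case $i=0$ being trivial. In general the given filtration restricted to $M_i$ yields $l_{\mathcal{X}}(M_i)\leq i$. For the reverse inequality, Step~1 with $j=n-1$ shows that $M_i\to M$ is an inflation whose cone has $\mathcal{X}$-length at most $n-i$. The inequality $l_{\mathcal{X}}(B)\leq l_{\mathcal{X}}(A)+l_{\mathcal{X}}(C)$ for any $\mathbb{E}$-triangle $A\to B\to C\dashrightarrow$ with $A,C\in\Filt_{\mathscr{C}}(\mathcal{X})$ (extracted from the proof of \cite[Lemma 2.7]{Wa} and already invoked in the proof of Proposition~\ref{JHP}) then gives $n=l_{\mathcal{X}}(M)\leq l_{\mathcal{X}}(M_i)+(n-i)$, hence $l_{\mathcal{X}}(M_i)\geq i$.

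\emph{Steps 3 and 4 (parts (2) and (3)).} For (2), part (1) supplies $l_{\mathcal{X}}(M_i)=i$ and $l_{\mathcal{X}}(M_{j+1})=j+1$, so the $\mathbb{E}$-triangle $M_i\to M_{j+1}\to \cone(g_{i,j})\dashrightarrow$ together with the same sub-additivity inequality forces $l_{\mathcal{X}}(\cone(g_{i,j}))\geq j-i+1$; combined with the upper bound from Step~1 this is the desired equality. Part (3) is then the instance $i=1,\,j=n-1$ of (2), paired with the explicit filtration produced in Step~1. The main obstacle is the bookkeeping in Step~1: the iterated use of (ET4) must be carried out so that at every stage the newly produced morphism between cones is certified as an inflation, for otherwise the resulting tower would only be a sequence of $\mathbb{E}$-triangles rather than an $\mathcal{X}$-filtration of $\cone(g_{i,j})$ in the sense of Section~2.
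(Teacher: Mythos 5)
Your argument is correct, and its substantive core coincides with the paper's: for part (3) you iterate (ET4) on the composable inflations exactly as the paper does, obtaining at each stage the conflation $\cone(g_{i,j-1})\to\cone(g_{i,j})\to\cone(f_j)\dashrightarrow$ whose left map (ET4) certifies as an inflation, and splicing gives the filtration $\cone(f)\sim(\cone(f_1),\dots,\cone(f_{n-1}))_{\mathcal{X}}$ — this is precisely the paper's diagram chase. The difference is in parts (1) and (2): the paper simply cites \cite[Lemma 2.9]{Wa}, whereas you make the lemma self-contained by a sandwich argument — upper bounds $l_{\mathcal{X}}(M_i)\le i$ and $l_{\mathcal{X}}(\cone(g_{i,j}))\le j-i+1$ from the truncated and spliced filtrations, and lower bounds from the subadditivity $l_{\mathcal{X}}(B)\le l_{\mathcal{X}}(A)+l_{\mathcal{X}}(C)$ (the splicing fact already used in the proof of Proposition~\ref{JHP}) applied to $M_i\to M\to\cone(g_{i,n-1})$ and $M_i\to M_{j+1}\to\cone(g_{i,j})$, together with the minimality of $l_{\mathcal{X}}(M)=n$. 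Your version buys independence from the cited lemma at the cost of a slightly longer argument; one cosmetic remark is that the ``induction on $i$'' announced in Step 2 is not actually needed, since the bound is obtained directly for each $i$.
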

\begin{proof} The first two statements follow from  \cite[Lemma 2.9]{Wa}. In particular, $l_{\mathcal{X}}(\cone(f))=n-1$. For any $2\leq i-1\leq n-1$, the axiom (ET4) of extriangulated categories yields the following commutative diagram
$$\xymatrix{
 M_{1} \ar@{=}[d] \ar[r]^-{f_{i-2}\cdots f_{1}}& M_{i-1}\ar[d]^{f_{i-1}} \ar[r] & \cone(f_{i-2}\cdots f_{1}) \ar[d]^{l_{i-2}}\\
  M_{1}  \ar[r] & M_{i}\ar[d] \ar[r] & \cone(f_{i-1}\cdots f_{1}) \ar[d]\\
  &  \cone(f_{i-1})\ar@{=}[r] & \cone(f_{i-1}). }
$$
Hence, we obtain an $\mathcal{X}$-filtration
\begin{equation*}
0\stackrel{l_{0}}{\longrightarrow}\cone(f_{1})\stackrel{l_{1}}{\longrightarrow}\cone(f_{2}f_{1})\stackrel{l_{2}}{\longrightarrow}\cdots\stackrel{l_{n-2}}{\longrightarrow}\cone(f)
.\end{equation*}
Therefore,  $\cone(f)\sim(\cone(f_{1}),\cdots, \cone(f_{n-1}))_{\mathcal{X}}$.
\end{proof}
Let us state the first main
result in this paper as the following
\begin{theorem}\label{main}
Let $\mathcal{X}$ be a semibrick and $\mathcal{T}=\Filt_{\mathscr{C}}(\mathcal{X})$. Then $\mathcal{T}$ satisfies {\rm (WJHP)}.
\end{theorem}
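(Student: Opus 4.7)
My plan is strong induction on $n = l_{\mathcal{X}}(M)$, the cases $n = 0, 1$ being trivial. For the inductive step, fix two length-$n$ $\mathcal{X}$-filtrations of $M$: one with intermediate objects $0 = M_0 \stackrel{f_0}{\longrightarrow} M_1 \longrightarrow \cdots \stackrel{f_{n-1}}{\longrightarrow} M_n = M$ and cones $X_1, \ldots, X_n$, and a second with cones $Y_1, \ldots, Y_n$. By Lemma \ref{length}(3), composing the inflations of the second filtration yields an inflation $Y_1 \to M$ whose cone $M''$ satisfies $M'' \sim (Y_{2},\ldots, Y_{n})_{\mathcal{X}}$.

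First I would locate $Y_1$ inside the first filtration. Let $k$ be the smallest index for which $Y_1 \to M$ factors through $M_k$, say as $\bar g : Y_1 \to M_k$. Because $k$ is minimal, the composition of $\bar g$ with the deflation $M_k \to X_k$ must be nonzero; otherwise the long exact sequence obtained from the $\mathbb{E}$-triangle $M_{k-1} \to M_k \to X_k \dashrightarrow$ by applying $\Hom_{\mathscr{C}}(Y_1, -)$ would push $\bar g$ down to $M_{k-1}$, contradicting minimality. Since $Y_1, X_k$ both lie in the semibrick $\mathcal{X}$, this nonzero composite is forced to be an isomorphism $\phi : Y_1 \stackrel{\sim}{\longrightarrow} X_k$.

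Second, $\bar g \circ \phi^{-1} : X_k \to M_k$ is a section of $M_k \to X_k$, so the $\mathbb{E}$-triangle $M_{k-1} \to M_k \to X_k \dashrightarrow$ splits and $M_k \cong M_{k-1} \oplus X_k$. The summand inclusion $s : X_k \hookrightarrow M_k$ composed with $M_k \to M$ is thus an inflation $\iota : X_k \to M$, and applying (ET4) to this composition produces an $\mathbb{E}$-triangle $M_{k-1} \to N \to C \dashrightarrow$ with $N := \cone(\iota)$ and $C := \cone(M_k \to M)$. Since $C \sim (X_{k+1},\ldots, X_n)_{\mathcal{X}}$, concatenating the $\mathcal{X}$-filtrations of $M_{k-1}$ and $C$ along this new $\mathbb{E}$-triangle yields $N \sim (X_{1},\ldots, \widehat{X}_k, \ldots, X_{n})_{\mathcal{X}}$, a filtration of length $n - 1$; and $l_{\mathcal{X}}(N) = n - 1$ by Proposition \ref{2-1}(3).

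The crucial identification is that $Y_1 \to M$ factors as $\iota \circ \phi$, because by construction $\bar g = s \circ \phi$. Hence the two inflations $Y_1 \to M$ and $\iota$ differ only by the isomorphism $\phi$ on their sources and therefore produce isomorphic cones: $N \cong M''$. Applying the inductive hypothesis to $N \cong M''$ (of $\mathcal{X}$-length $n-1$), its two filtrations $[X_{1},\ldots, \widehat{X}_k, \ldots, X_{n}]$ and $[Y_{2},\ldots, Y_{n}]$ are isomorphic as sequences; prepending $X_k \cong Y_1$ yields $[X_{1},\ldots, X_{n}] \cong [Y_{1},\ldots, Y_{n}]$, completing the induction. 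I expect the principal technical point to be this cone identification: one must carefully track the actual morphism $Y_1 \to M$ (not merely its isoclass) through the splitting of $M_k$ to see the factorization $\iota \circ \phi$ and conclude that the two $\mathbb{E}$-triangles $Y_1 \to M \to M''$ and $X_k \to M \to N$ are isomorphic; the remaining steps are relatively routine applications of Proposition \ref{2-1}, (ET4), and the Hom long exact sequence associated to an $\mathbb{E}$-triangle.
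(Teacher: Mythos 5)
Your argument is correct and takes a genuinely different route from the paper's proof. You locate the bottom object $Y_1$ of the second filtration inside the first one: the minimal index $k$ through which $Y_1\to M$ factors forces the composite $Y_1\to M_k\to X_k$ to be nonzero (via exactness of the $\Hom(Y_1,-)$ sequence attached to $M_{k-1}\to M_k\to X_k$), hence an isomorphism by the semibrick condition; splitting $M_k\cong M_{k-1}\oplus X_k$ and applying (ET4) then funnels both filtrations into a single object $N\cong M''$ of $\mathcal{X}$-length $n-1$, closing the induction in one stroke. By contrast, the paper compares the bottom inflation $f=f_{n-1}\cdots f_1:X_1\to M$ of the first filtration against the top deflation $h_2:M\to\cone(g_{n-1})$ of the second, and branches on whether $h_2f$ is zero or an isomorphism; the isomorphism case requires a further Krull--Schmidt subdivision according to whether $\cone(f)\cong Y_{n-1}$, and the zero case builds a chain of (ET4)$^{\rm op}$ diagrams. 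Your choice of the minimal $k$ sidesteps this dichotomy because the resulting brick morphism is nonzero by construction, yielding a single-case step that more closely mirrors the classical Jordan--H\"{o}lder exchange argument; what the paper's version buys is that it never has to search through the intermediate objects $M_1,\dots,M_n$. Two small details you would still want to record explicitly: that $Y_1\to M$ is nonzero (so that $k\geq 1$ and the minimal $k$ exists) and that $\iota\neq 0$ (so Proposition \ref{2-1}(3) really gives $l_{\mathcal{X}}(N)=n-1$). Both are immediate: since $\iota\phi$ equals the composite inflation $Y_1\to M$, and if that inflation were zero then Lemma \ref{c1} together with Proposition \ref{2-1}(4)--(5) would give $l_{\mathcal{X}}(M'')\geq l_{\mathcal{X}}(M)=n$, contradicting $l_{\mathcal{X}}(M'')=n-1$ from Lemma \ref{length}.
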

\begin{proof} Take any object $M\in\mathcal{T}$ with $l_{\mathcal{X}}(M)=n$. Let
\begin{equation*}
0=X_{0}\stackrel{f_{0}}{\longrightarrow}X_{1}\stackrel{f_{1}}{\longrightarrow}X_{2}{\longrightarrow}\cdots\stackrel{f_{n-1}}{\longrightarrow}X_{n}=M
\end{equation*}
and
\begin{equation*}
0=Y_{0}\stackrel{g_{0}}{\longrightarrow}Y_{1}\stackrel{g_{1}}{\longrightarrow}Y_{2}{\longrightarrow}\cdots\stackrel{g_{n-1}}{\longrightarrow}Y_{n}=M
\end{equation*}
be two $\mathcal{X}$-filtrations of $M$. In what follows, we prove that
$$[\cone(f_{0}),\cone(f_{1}),\cdots,\cone(f_{n-1})]\cong[\cone(g_{0}),\cone(g_{1}),\cdots,\cone(g_{n-1})].$$

The cases of $n=0,1$ are trivial. Set $f=f_{n-1}\cdots f_{1}$. Consider the following diagram
\begin{equation}\label{three}
\xymatrix{
   X_{1} \ar[r]^-{f} & M \ar@{=}[d] \ar[r]^-{h_{1}} & \cone(f) \\
    Y_{n-1}\ar[r]^-{g_{n-1}} &M  \ar[r]^-{h_{2}} & \cone(g_{n-1}),  }
\end{equation}
where $\cone(g_{n-1}){\in}\mathcal{X}$ and $l_{\mathcal{X}}(\cone(f))=n-1$. Note that $h_{2}f$ is zero or an isomorphism.

First of all, let us prove for the case of $n=2$. If $h_{2}f=h_{2}f_{1}$ is an isomorphism, i.e., there exists a morphism $x:\cone(g_{1})\rightarrow X_1$ such that $xh_2f={\rm id}_{X_1}$ and $h_2fx={\rm id}_{{\rm cone}(g_{1})}$, then $f$ is a splitting monomorphism and $h_2$ is a splitting epimorphism. Thus, $M\cong X_{1}\oplus\cone(f_{1})\cong Y_{1}\oplus\cone(g_{1})$ and then $[\cone(f_{0}),\cone(f_{1})]\cong[\cone(g_{0}),{\rm cone}(g_{1})]$. If $h_{2}f_{1}=0$, there exist $a: X_{1}\rightarrow Y_{1}$ and $b:\cone(f)\rightarrow\cone(g_{1})$ such that (\ref{three}) is commutative. Note that $b$ is either zero or an isomorphism. If $b=0$, then $h_{2}=0$ and $M$ is isomorphic to a direct summand of $Y_{1}$. So we have that $l_{\mathcal{X}}(M)\leq l_{\mathcal{X}}(Y_{1})=1$ by Proposition \ref{2-1}(5). This is a contradiction. Thus, $b$ is an isomorphism and then so is $a$. Hence, $[\cone(f_{0}),\cone(f_{1})]\cong[\cone(g_{0}),\cone(g_{1})]$.

Now we consider the case of $n\geq2$.

$\mathbf{Case}$ 1  If $h_{2}f$ is an isomorphism, then $M\cong X_{1}\oplus\cone(f)\cong Y_{n-1}\oplus\cone(g_{n-1})$. Note that $l_{\mathcal{X}}(Y_{n-1})=n-1$ and $Y_{n-1}\sim(\cone(g_{0}),\cone(g_{1}),\cdots,\cone(g_{n-2}))_{\mathcal{X}}$. By Lemma \ref{length}, $l_{\mathcal{X}}(\cone(f))=n-1$ and $\cone(f)\sim(\cone(f_{1}),\cone(f_{2}),\cdots,\cone(f_{n-1}))_{\mathcal{X}}$.

If $\cone(f)\cong Y_{n-1}$, by induction,
$$[\cone(f_{1}),\cone(f_{2}),\cdots,\cone(f_{n-1})]\cong[\cone(g_{0}),\cone(g_{1}),\cdots,\cone(g_{n-2})].$$
In this case, $\cone(f_{0})=X_{1}\cong\cone(g_{n-1})$, we get that
$$[\cone(f_{0}),\cone(f_{1}),\cdots,\cone(f_{n-1})]\cong[\cone(g_{0}),\cone(g_{1}),\cdots,\cone(g_{n-1})].$$

If $\cone(f)\ncong Y_{n-1}$, then there exists a non-trivial decomposition $Y_{n-1}=L_{1}\oplus L_{2}$ such that $\cone(f)\cong L_{1}\oplus\cone(g_{n-1})$ and $X_{1}\cong L_{2}$. Here
$l_{\mathcal{X}}(L_{1})=l_{\mathcal{X}}(Y_{n-1})-l_{\mathcal{X}}(L_{2})=n-2$. Assume that $L_{1}\sim(N_{0},N_{1},\cdots, N_{n-3})_{\mathcal{X}}$. Since $L_{1}\stackrel{}{\longrightarrow}Y_{n-1}\stackrel{}{\longrightarrow}L_{2}\stackrel{0}\dashrightarrow$ is an $\mathbb{E}$-triangle, we have that $Y_{n-1}\sim(N_{0},N_{1},\cdots, N_{n-3},L_{2})_{\mathcal{X}}$. By induction, $[N_{0},N_{1},\cdots, N_{n-3},L_{2}]\cong[\cone(g_{0}),\cone(g_{1}),\cdots,\cone(g_{n-2})]$.
Similarly, we have that \begin{equation}\label{nage}[N_{0},N_{1},\cdots, N_{n-3},\cone(g_{n-1})]\cong[\cone(f_{1}),\cone(f_{2}),\cdots,\cone(f_{n-1})],\end{equation}
since $L_{1}\stackrel{}{\longrightarrow}\cone(f)\stackrel{}{\longrightarrow}\cone(g_{n-1})\stackrel{0}\dashrightarrow$ is an $\mathbb{E}$-triangle.
By (\ref{nage}), $\cone(g_{n-1})\cong\cone(f_{i})$ for some $1\leq i\leq n-1$ and $$[N_{0},N_{1},\cdots, N_{n-3}]\cong[\cone(f_{1}),\cdots,\cone(f_{i-1}),\cone(f_{i+1}),\cdots,\cone(f_{n-1})].$$
Thus, we get that
\begin{flalign*}
&[\cone(g_{0}),\cone(g_{1}),\cdots,\cone(g_{n-2})]\\&\cong[N_{0},N_{1},\cdots, N_{n-3},L_{2}]
\cong[N_{0},N_{1},\cdots, N_{n-3},\cone(f_{0})]\\
&\cong[\cone(f_{0}),\cone(f_{1}),\cdots,\cone(f_{i-1}),\cone(f_{i+1}),\cdots,\cone(f_{n-1})].
\end{flalign*}
Since $\cone(g_{n-1})\cong\cone(f_{i})$, we obtain that
$$[\cone(g_{0}),\cone(g_{1}),\cdots,\cone(g_{n-1})]\cong[\cone(f_{0}),\cone(f_{1}),\cdots,\cone(f_{n-1})].$$

$\mathbf{Case}$ 2 If $h_{2}f=0$, there is a commutative diagram
\begin{equation*}
\xymatrix{
   X_{1} \ar[r]^-{f} \ar[d]^-{a} & M \ar@{=}[d] \ar[r]^-{h_{1}} & \cone(f) \ar[d]^-{} \\
    Y_{n-1}\ar[r]^-{g_{n-1}} &M  \ar[r]^-{h_{2}} & \cone(g_{n-1}).  }
\end{equation*}
Since $l_{\mathcal{X}}(\cone(f))=n-1<l_{\mathcal{X}}(M)$, we obtain that $f\neq0$ and thus $a\neq0$. By Proposition \ref{2-1}(3), $a$ is an inflation and $l_{\mathcal{X}}(\cone(a))=l_{\mathcal{X}}(Y_{n-1})-1=n-2$. Let
\begin{equation*}
0=Z_{0}\stackrel{h_{0}}{\longrightarrow}Z_{1}\stackrel{h_{1}}{\longrightarrow}Z_{2}{\longrightarrow}\cdots\stackrel{h_{n-3}}{\longrightarrow}Z_{n-2}=\cone(a)
\end{equation*}
be an $\mathcal{X}$-filtration of $\cone(a)$. For the inflation $h_{n-3}:Z_{n-3}\rightarrow Z_{n-2}$, ${\rm (ET4)^{op}}$ yields the following commutative diagram
$$\xymatrix{
 X_{1} \ar@{=}[d] \ar[r]^-{s_{1}}& N_{n-2}\ar[d]^{t_{n-2}} \ar[r] & Z_{n-3} \ar[d]^{h_{n-3}}\\
  X_{1}  \ar[r]^-{a} & Y_{n-1}\ar[d] \ar[r] &Z_{n-2}  \ar[d]\\
  &  \cone(h_{n-3})\ar@{=}[r] & \cone(h_{n-3}) }
$$
with $l_{\mathcal{X}}(Z_{n-3})=n-3$. Similarly, we can form the following commutative diagram
$$\xymatrix{
 X_{1} \ar@{=}[d] \ar[r]^-{s_{2}}& N_{n-3}\ar[d]^{t_{n-3}} \ar[r] & Z_{n-4} \ar[d]^{h_{n-4}}\\
  X_{1}  \ar[r]^-{s_{1}} & N_{n-2}\ar[d] \ar[r] &Z_{n-3}  \ar[d]\\
  &  \cone(h_{n-4})\ar@{=}[r] & \cone(h_{n-4}) }
$$
with $l_{\mathcal{X}}(Z_{n-4})=n-4$. By repeating this process, we obtain an $\mathcal{X}$-filtration
\begin{equation}\label{1.3}
0=N_{0}\stackrel{}{\longrightarrow}X_{1}\stackrel{t_{1}s_{n-2}}{\longrightarrow}N_{2}\stackrel{t_{2}}{\longrightarrow}\cdots\stackrel{t_{n-4}}{\longrightarrow}N_{n-3}\stackrel{t_{n-3}}{\longrightarrow}N_{n-2}\stackrel{t_{n-2}}{\longrightarrow}Y_{n-1}\stackrel{g_{n-1}}{\longrightarrow}Y_{n}=M.\end{equation}
Recall that $Y_{n-1}\sim(\cone(g_{0}),\cone(g_{1}),\cdots,\cone(g_{n-2}))_{\mathcal{X}}$. By (\ref{1.3}) and induction, we obtain that
\begin{align*}
Y_{n-1}&\sim(X_{1},\cone(t_{1}s_{n-2}),\cone(t_{2}),\cdots,\cone(t_{n-2}))_{\mathcal{X}}\\
&=(X_{1},\cone(h_{0}),\cone(h_{1}),\cdots,\cone(h_{n-3}))_{\mathcal{X}}
\end{align*}
and then $[\cone(g_{0}),\cone(g_{1}),\cdots,\cone(g_{n-2})]\cong[X_{1},\cone(h_{0}),\cone(h_{1}),\cdots,\cone(h_{n-3})]$.
Since $f=g_{n-1}a=g_{n-1}t_{n-2}\cdots t_{1}s_{n-2}$, by Lemma \ref{length}, we have that
\begin{align*}
\cone(f)&\sim(\cone(t_{1}s_{n-2}),\cone(t_{2}),\cdots,\cone(t_{n-2}),\cone(g_{n-1}))_{\mathcal{X}}\\
&=(\cone(h_{0}),\cone(h_{1}),\cdots,\cone(h_{n-3}),\cone(g_{n-1}))_{\mathcal{X}}.
\end{align*}
Recall that $\cone(f)\sim(\cone(f_{1}),\cone(f_{2}),\cdots,\cone(f_{n-1}))_{\mathcal{X}}$, by induction, we obtain that
\begin{equation*}
[\cone(f_{1}),\cone(f_{2}),\cdots,\cone(f_{n-1})]\cong[\cone(h_{0}),\cone(h_{1}),\cdots,\cone(h_{n-3}),\cone(g_{n-1})].
\end{equation*}
Hence, we have that
\begin{align*}
&[\cone(f_{0}),\cone(f_{1}),\cone(f_{2}),\cdots,\cone(f_{n-1})]\\
&\cong[X_{1},\cone(h_{0}),\cone(h_{1}),\cdots,\cone(h_{n-3}),\cone(g_{n-1})]\\
&\cong[\cone(g_{0}),\cone(g_{1}),\cdots,\cone(g_{n-2}),\cone(g_{n-1})].
\end{align*}
Therefore, we complete the proof.
\end{proof}

An extriangulated category $\mathscr{C}$ is called {\em length} if $\mathscr{C}=\Filt_{\mathscr{C}}(\cim(\mathscr{C}))$.
Given an object $M\in\mathscr{C}$, we say that the object $N\in\mathscr{C}$ is a {\em $[1]$-shift} of $M$, if there exists an $\mathbb{E}$-triangle
$M\stackrel{}{\longrightarrow}0\stackrel{}{\longrightarrow}N\stackrel{}\dashrightarrow$. In this case, we write $N$ as $TM$. Clearly, for any $M\in\mathscr{C}$, $TM$ is unique up to isomorphisms if it exists. If $\mathscr{C}$ is an exact category, the $[1]$-shift is well-defined only for the zero object. If $\mathscr{C}$ is a triangulated category, the $[1]$-shift is well-defined for any object. In what follows, when we write $TM\in\mathscr{C}$, it means that $TM$ exists in $\mathscr{C}$.

\begin{lemma}\label{c1} Let
$X\stackrel{f}{\longrightarrow}L\stackrel{g}{\longrightarrow}Y\stackrel{\delta}\dashrightarrow$ be an $\mathbb{E}$-triangle. If $f=0$, then $TX\in\mathscr{C}$ and $Y\cong L\oplus TX$.
\end{lemma}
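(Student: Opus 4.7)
The plan is to first decompose $Y$ as $L\oplus Y'$ using $f=0$ to exhibit a retraction of $g$, and then identify $Y'$ with $TX$ by unpacking $\delta$ and matching $\mathbb{E}$-triangles via a Krull--Schmidt cancellation.

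First I would extract a retraction of $g$. The long exact sequence obtained from the $\mathbb{E}$-triangle by applying $\Hom(-, L)$ reads
\[
\Hom(Y, L)\xrightarrow{g^{*}}\Hom(L, L)\xrightarrow{f^{*}=0}\Hom(X, L)\to\cdots,
\]
and since $f=0$, surjectivity of $g^{*}$ gives $h\colon Y\to L$ with $hg=\mathrm{id}_{L}$. By the Krull--Schmidt hypothesis, $Y\cong L\oplus Y'$ for some $Y'\in\mathscr{C}$, with inclusion $\iota\colon Y'\to Y$ and projection $\pi\colon Y\to Y'$ satisfying $h\iota=0$ and $\pi g=0$.

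Next I would show that $\delta=\pi^{*}(\iota^{*}\delta)$. Under the biproduct isomorphism $\mathbb{E}(Y, X)\cong\mathbb{E}(L, X)\oplus\mathbb{E}(Y', X)$, the class $\delta$ is encoded by the pair $(g^{*}\delta, \iota^{*}\delta)$. Applying the long exact sequence of the $\mathbb{E}$-triangle with target $X$, the element $\delta$ equals $\delta^{\sharp}(\mathrm{id}_{X})$, and hence lies in the kernel of $g^{*}\colon\mathbb{E}(Y, X)\to\mathbb{E}(L, X)$; so $g^{*}\delta=0$ and the first component vanishes.

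Finally I would realize $\iota^{*}\delta\in\mathbb{E}(Y', X)$ by an $\mathbb{E}$-triangle $X\to Z\to Y'\dashrightarrow\iota^{*}\delta$ and argue that $Z=0$. Forming the direct sum with the split $\mathbb{E}$-triangle $0\to L\xrightarrow{\mathrm{id}}L\dashrightarrow 0$ produces an $\mathbb{E}$-triangle $X\to Z\oplus L\to Y\dashrightarrow\pi^{*}(\iota^{*}\delta)=\delta$. Since realizations of a common class are isomorphic as conflations, this triangle is isomorphic to the original $X\xrightarrow{0}L\xrightarrow{g}Y\dashrightarrow\delta$, forcing $Z\oplus L\cong L$. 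Krull--Schmidt cancellation then yields $Z=0$, so $X\to 0\to Y'\dashrightarrow\iota^{*}\delta$ is an $\mathbb{E}$-triangle. This exhibits $TX\in\mathscr{C}$ with $TX\cong Y'$, and hence $Y\cong L\oplus TX$.

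The hardest step is the final identification $Z=0$: one must verify that the direct-sum $\mathbb{E}$-triangle really is the realization of $\pi^{*}(\iota^{*}\delta)$ (and not merely some extension with the same class), which relies on additivity of $\mathbb{E}$ and of the realization $\mathfrak{s}$, before Krull--Schmidt cancellation can be invoked to force $Z=0$.
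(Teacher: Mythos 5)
Your proof is correct, but it takes a genuinely different route from the paper's. Both arguments start the same way: since $f=0$, the long exact sequence $\Hom(Y,L)\xrightarrow{g^*}\Hom(L,L)\xrightarrow{0}\Hom(X,L)$ shows $g$ is a split monomorphism, so $Y\cong L\oplus Y'$. From there the paper applies $(\mathrm{ET4})^{\mathrm{op}}$ to the composable deflations $g=\binom{g_1}{g_2}\colon L\to Y_1\oplus Y_2$ and the split projection $(1\ 0)\colon Y_1\oplus Y_2\to Y_1$ (with $g_1$ an isomorphism), which directly manufactures a conflation $X\to H\to Y_2\dashrightarrow$ with $H=\cocone(g_1)\cong 0$; that sequence \emph{is} the witness that $TX\cong Y_2$. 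You instead work entirely at the level of extension classes: you use the biadditivity of $\mathbb{E}$ to split $\delta$ under $\mathbb{E}(Y,X)\cong\mathbb{E}(L,X)\oplus\mathbb{E}(Y',X)$, kill the first component via $g^*\delta=\delta^\sharp(\mathrm{id}_X)\in\ker g^*=0$, realize $\iota^*\delta$ by a conflation $X\to Z\to Y'$, take the direct sum with the trivial conflation on $L$, and then invoke additivity of $\mathfrak{s}$, uniqueness of realizations, and Krull--Schmidt cancellation to force $Z\cong 0$. Your argument is longer and more computational, and it leans on Krull--Schmidt where the paper does not, but it buys a certain transparency: one sees exactly how $\delta$ localizes to the summand $Y'$, and the only non-formal ingredient is the cancellation $Z\oplus L\cong L\Rightarrow Z=0$. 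The one point you should be explicit about, as you yourself flag, is that the direct-sum conflation realizes precisely $\pi^*(\iota^*\delta)$ (this uses that $\mathfrak{s}$ is an \emph{additive} realization) and that this class equals $\delta$ under the chosen biproduct structure on $Y$ in which $g$ is the inclusion of $L$; once that is spelled out, the appeal to essential uniqueness of $\mathfrak{s}(\delta)$ is legitimate.
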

\begin{proof} Since $f=0$, we obtain that $g$ is a splitting monomorphism. Then there exists a decomposition $Y=Y_{1}\oplus Y_{2}$ such that $ g={g_{1}\choose g_{2}}:L\rightarrow Y_{1}\oplus Y_{2}$ with $g_{1}$ being an isomorphism. By $\rm(ET4)^{op}$, there is a commutative diagram
\begin{equation*}
\xymatrix{
  X \ar@{=}[d]\ar[r]^-{} &H\ar[d]_-{} \ar[r]^-{} & Y_{2}\ar[d]^-{{0\choose 1}}\ar@{-->}[r]& \\
  X \ar[r]^-{0} & L\ar[d]_-{g_{1}} \ar[r]^-{{g_1\choose g_2}} & Y_{1}\oplus Y_{2}\ar[d]^-{(1~0)}\ar@{-->}[r]^-{\delta}& \\
   &Y_{1} \ar@{-->}[d]^{0}\ar@{=}[r] & Y_{1}\ar@{-->}[d]^{0}& \\
&&&}
\end{equation*}
with $H\cong0$, since $g_{1}$ is an isomorphism.  It follows that $TX\cong Y_{2}$ and $Y\cong L\oplus TX$.
\end{proof}

By Theorem \ref{main}, each filtration subcategory generated by a semibrick satisfies (WJHP), but in general, (JHP) does not hold. For this, we introduce the following

\begin{definition} We say a semibrick $\mathcal{X}$ is {\em proper} if $X[1]\notin\Filt_{\mathscr{C}}(\mathcal{X})$ for any $X\in\mathcal{X}$. That is, for any $X\in\mathcal{X}$, there does not exist an $\mathbb{E}$-triangle of the form $X\stackrel{}{\longrightarrow}0\stackrel{}{\longrightarrow}N\stackrel{}\dashrightarrow$ for some $N\in\Filt_{\mathscr{C}}(\mathcal{X})$.
\end{definition}
\begin{definition}$($\cite[Definition 3.1]{Wa}$)$
A set $\mathcal{X}$ of isoclasses of objects in $\mathscr{C}$ is said to be {\em simple} if $\mathcal{X}= \cim(\Filt_{\mathscr{C}}(\mathcal{X}))$.
\end{definition}

\begin{remark}\label{2.2} If $\mathscr{C}$ is an exact category, it is easy to see that $\mathcal{X}$ is a semibrick if and only if $\mathcal{X}$ is a proper semibrick if and only if $\mathcal{X}$ is a simple semibrick. But for triangulated categories, these equivalences do not hold. For more details, see Example \ref{4-2}.
\end{remark}

\begin{lemma}\label{gen} Let $\mathcal{X}$ be a semibrick  in $\mathscr{C}$. The following are equivalent.

$(1)$ $\mathcal{X}$ is a proper semibrick.

$(2)$ If $f:X\rightarrow M$ is an inflation in $\Filt_{\mathscr{C}}(\mathcal{X})$ with $X\in\mathcal{X}$, then $f\neq0$ and $l_{\mathcal{X}}(\cone(f))=l_{\mathcal{X}}(M)-1$.

$(3)$  For any $M\in\Filt_{\mathscr{C}}(\mathcal{X})$, $M[1]\notin\Filt_{\mathscr{C}}(\mathcal{X})$.

$(4)$ If $f:N\rightarrow M$ is an inflation in $\Filt_{\mathscr{C}}(\mathcal{X})$, then $f\neq0$.

$(5)$ For any $\mathbb{E}$-triangle $A\stackrel{}{\longrightarrow}B\stackrel{}{\longrightarrow}C\stackrel{}\dashrightarrow$ in $\Filt_{\mathscr{C}}(\mathcal{X})$, we have that $l_{\mathcal{X}}(B)=l_{\mathcal{X}}(A)+l_{\mathcal{X}}(C)$. In particular, $\mathcal{X}$ is a simple semibrick and thus $\mathcal{X}$-filtrations coincide with composition series in $\Filt_{\mathscr{C}}(\mathcal{X})$.
\end{lemma}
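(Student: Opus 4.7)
The strategy is to prove the cycle $(1)\Rightarrow (2)\Rightarrow (4)\Rightarrow (3)\Rightarrow (1)$ together with the loop $(2)\Rightarrow (5)\Rightarrow (1)$, yielding all five equivalences. The main tools are Lemma \ref{c1} (a zero inflation splits off a $[1]$-shift on the cone) and the various parts of Proposition \ref{2-1}. For $(1)\Rightarrow (2)$, if $f\colon X\to M$ is an inflation in $\Filt_{\mathscr{C}}(\mathcal{X})$ with $X\in\mathcal{X}$ and $f=0$, Lemma \ref{c1} gives $\cone(f)\cong M\oplus TX$, and closure under summands (Proposition \ref{2-1}(4)) forces $TX\in\Filt_{\mathscr{C}}(\mathcal{X})$, contradicting (1); Proposition \ref{2-1}(3) then supplies the length identity. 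For $(2)\Rightarrow (4)$, given a zero inflation $f\colon N\to M$ with $N\neq 0$, Proposition \ref{2-1}(2) provides an inflation $g\colon X\to N$ with $X\in\mathcal{X}$, so the composite $fg$ is a zero inflation from $X\in\mathcal{X}$, contradicting (2). The step $(4)\Rightarrow (3)$ is immediate: apply (4) to $M\to 0$ coming from a putative triangle $M\to 0\to TM\dashrightarrow$ in $\Filt_{\mathscr{C}}(\mathcal{X})$. And $(3)\Rightarrow (1)$ is the obvious restriction $\mathcal{X}\subseteq\Filt_{\mathscr{C}}(\mathcal{X})$.

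The technical core is $(2)\Rightarrow (5)$, which I would prove by induction on $l_{\mathcal{X}}(A)$. For $A=0$ the extension sits in $\mathbb{E}(C,0)=0$, so the triangle splits and $B\cong C$. For $l_{\mathcal{X}}(A)\geq 1$, apply Proposition \ref{2-1}(2) to obtain an $\mathbb{E}$-triangle $X\to A\to A'\dashrightarrow$ with $X\in\mathcal{X}$ and $l_{\mathcal{X}}(A')=l_{\mathcal{X}}(A)-1$, then feed this and the original triangle $A\to B\to C\dashrightarrow$ into (ET4) to produce two companion $\mathbb{E}$-triangles
\begin{equation*}
X\longrightarrow B\longrightarrow F\dashrightarrow,\qquad A'\longrightarrow F\longrightarrow C\dashrightarrow .
\end{equation*}
The second triangle places $F$ in $\Filt_{\mathscr{C}}(\mathcal{X})$ (Proposition \ref{2-1}(1)) and, by induction, yields $l_{\mathcal{X}}(F)=l_{\mathcal{X}}(A')+l_{\mathcal{X}}(C)$; the first triangle, together with (2) applied to the composite inflation $X\to B$, yields $l_{\mathcal{X}}(F)=l_{\mathcal{X}}(B)-1$. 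Combining the two identities gives $l_{\mathcal{X}}(B)=l_{\mathcal{X}}(A)+l_{\mathcal{X}}(C)$. For the ``in particular'' clause, additivity forces every $X\in\mathcal{X}$ to be simple in $\Filt_{\mathscr{C}}(\mathcal{X})$, since any $\mathbb{E}$-triangle $A\to X\to B\dashrightarrow$ there has $l_{\mathcal{X}}(A)+l_{\mathcal{X}}(B)=1$; conversely, Proposition \ref{2-1}(2) peels a factor of $\mathcal{X}$ off any simple object and simplicity collapses the remainder to zero, so $\mathcal{X}=\cim(\Filt_{\mathscr{C}}(\mathcal{X}))$ and $\mathcal{X}$-filtrations coincide with composition series. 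The final implication $(5)\Rightarrow (1)$ is the shortest: if $TX\in\Filt_{\mathscr{C}}(\mathcal{X})$ for some $X\in\mathcal{X}$, applying (5) to $X\to 0\to TX\dashrightarrow$ would give $0=1+l_{\mathcal{X}}(TX)$, which is absurd.

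The principal obstacle is the (ET4) construction in $(2)\Rightarrow (5)$: one must keep both companion triangles in play simultaneously, using one both to place $F$ in $\Filt_{\mathscr{C}}(\mathcal{X})$ and to invoke the inductive hypothesis, and the other to apply (2) to the composite inflation $X\to B$. Every other step reduces either to a short direct argument or to an application of Lemma \ref{c1} combined with closure of $\Filt_{\mathscr{C}}(\mathcal{X})$ under direct summands.
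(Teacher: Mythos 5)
Your proof is correct and follows essentially the same approach as the paper: Lemma \ref{c1} to handle zero inflations, Proposition \ref{2-1} to manage lengths and peel off $\mathcal{X}$-factors, and an (ET4) induction for the additivity statement (5). The only cosmetic difference is your implication chain $(2)\Rightarrow(4)$ via closure of inflations under composition, whereas the paper runs $(1)\Rightarrow(3)\Rightarrow(4)$ and $(4)\Rightarrow(2)$; and you spell out the ``in particular'' clause of (5), which the paper leaves implicit.
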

\begin{proof} $(1)\Rightarrow(2)$ Suppose that $f=0$, by Lemma \ref{c1}, $\cone(f)\cong M\oplus X[1]$ and thus $X[1]\in \Filt_{\mathscr{C}}(\mathcal{X})$. This is a contradiction. Hence, $f\neq0$. By Proposition \ref{2-1}(3), $l_{\mathcal{X}}(\cone(f))=l_{\mathcal{X}}(M)-1$. Similarly, one can prove that $(3)\Rightarrow(4)$
and $(2)\Rightarrow(1)$ is clear.

$(1)\Rightarrow(3)$ Suppose that there is an object $M\in\Filt_{\mathscr{C}}(\mathcal{X})$ such that $M[1]\in\Filt_{\mathscr{C}}(\mathcal{X})$. By Proposition \ref{2-1}(2), there exists an  $\mathbb{E}$-triangle $X\stackrel{f}{\longrightarrow}M\stackrel{}{\longrightarrow}M'\stackrel{}\dashrightarrow$ in $\Filt_{\mathscr{C}}(\mathcal{X})$ with $X\in\mathcal{X}$. By ${\rm (ET4)}$, there is an $\mathbb{E}$-triangle $M'\stackrel{f}{\longrightarrow}X[1]\stackrel{}{\longrightarrow}M[1]\stackrel{}\dashrightarrow$ and thus $X[1]\in \Filt_{\mathscr{C}}(\mathcal{X})$. This is a contradiction. Clearly, $(3)\Rightarrow(2)$ and $(4)\Rightarrow(2)$. Therefore, we have obtained the equivalences of the first four statements.

$(1)\Rightarrow(5)$ Take any $\mathbb{E}$-triangle $A\stackrel{f}{\longrightarrow}B\stackrel{}{\longrightarrow}C\stackrel{}\dashrightarrow$ in $\Filt_{\mathscr{C}}(\mathcal{X})$,
we proceed the proof by induction on $l_{\mathcal{X}}(A)=n$. If $n=0$, then $B\cong C$ and thus $l_{\mathcal{X}}(B)=l_{\mathcal{X}}(C)$. If $n=1$, by (2), $l_{\mathcal{X}}(C)=l_{\mathcal{X}}(B)-1$. For $n>1$, by Proposition \ref{2-1}(2), there exists an $\mathbb{E}$-triangle $X\stackrel{}{\longrightarrow}A\stackrel{}{\longrightarrow}A'\stackrel{}\dashrightarrow$ such that $l_{\mathcal{X}}(X)=1$ and $l_{\mathcal{X}}(A')=n-1$. Hence, $\rm (ET4)$ yields the following commutative diagram of conflations
$$\xymatrix{
 X \ar@{=}[d] \ar[r] & A\ar[d]^{} \ar[r] & A' \ar[d]\\
  X  \ar[r] & B \ar[d] \ar[r] & E \ar[d]\\
  & C \ar@{=}[r] & C }
$$
in $\Filt_{\mathscr{C}}(\mathcal{X})$. By induction, we have that $$l_{\mathcal{X}}(B)=l_{\mathcal{X}}(X)+l_{\mathcal{X}}(E)=l_{\mathcal{X}}(X)+l_{\mathcal{X}}(A')+l_{\mathcal{X}}(C)=l_{\mathcal{X}}(A)+l_{\mathcal{X}}(C).$$

$(5)\Rightarrow(2)$ Suppose that $f=0$, by Lemma \ref{c1}, $M$ is a direct summand of $\cone(f)$. It follows that
$$1+l_{\mathcal{X}}(\cone(f))=l_{\mathcal{X}}(M)\leq l_{\mathcal{X}}(\cone(f)).$$
This is a contradiction. Hence, $f\neq 0$.
\end{proof}

\begin{theorem}\label{main2}
Let $\mathcal{X}$ be a semibrick and set $\mathcal{T}=\Filt_{\mathscr{C}}(\mathcal{X})$. Then
$\mathcal{T}$ satisfies {\rm (JHP)} if and only if $\mathcal{X}$ is a proper semibrick.
\end{theorem}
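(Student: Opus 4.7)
The plan is to pivot the entire argument on Lemma \ref{gen}, specifically the equivalence (1)$\Leftrightarrow$(5): a semibrick $\mathcal{X}$ is proper if and only if $\mathcal{X}$-length is additive along every $\mathbb{E}$-triangle in $\mathcal{T}$. This equivalence is the natural bridge between the structural condition on $\mathcal{X}$ and the numerical condition needed to strengthen (WJHP) into (JHP). Indeed, $\mathcal{T}$ already satisfies (WJHP) by Theorem \ref{main}, so by Definition \ref{cs} the only additional input required for (JHP) is that all $\mathcal{X}$-filtrations of a given $M$ have the \emph{same} length (which then is automatically equal to $l_{\mathcal{X}}(M)$, and so WJHP applies uniformly).

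For the sufficiency direction, I would assume $\mathcal{X}$ is proper and apply Lemma \ref{gen}(5) to conclude that $\mathcal{X}$-length is additive along $\mathbb{E}$-triangles in $\mathcal{T}$. Given any $\mathcal{X}$-filtration $0 = M_0 \to M_1 \to \cdots \to M_n = M$ with cones $\cone(f_i) \in \mathcal{X}$, I would induct on $i$: the conflation $M_i \to M_{i+1} \to \cone(f_i)$ together with additivity gives $l_{\mathcal{X}}(M_{i+1}) = l_{\mathcal{X}}(M_i) + 1$, since $\cone(f_i) \in \mathcal{X}$ trivially has $\mathcal{X}$-length $1$ via $0 \to \cone(f_i)$. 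Hence $n = l_{\mathcal{X}}(M)$ regardless of the filtration chosen, and combining this uniform length with (WJHP) delivers (JHP).

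For the necessity direction, assume $\mathcal{T}$ satisfies (JHP) and fix an arbitrary $\mathbb{E}$-triangle $A \to B \to C \dashrightarrow$ in $\mathcal{T}$. Splicing minimal-length $\mathcal{X}$-filtrations of $A$ and $C$ -- the concatenation construction of \cite[Lemma 2.7]{Wa} already invoked in the proof of Proposition \ref{JHP} -- produces an $\mathcal{X}$-filtration of $B$ of length $l_{\mathcal{X}}(A) + l_{\mathcal{X}}(C)$. Since (JHP) forces every $\mathcal{X}$-filtration of $B$ to have the common length $l_{\mathcal{X}}(B)$, we conclude $l_{\mathcal{X}}(B) = l_{\mathcal{X}}(A) + l_{\mathcal{X}}(C)$. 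This is precisely condition (5) of Lemma \ref{gen}, and the equivalence (5)$\Rightarrow$(1) then yields that $\mathcal{X}$ is proper.

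The only potentially subtle step is making sure the induction in the sufficiency direction runs cleanly -- namely that $l_{\mathcal{X}}(\cone(f_i)) = 1$ whenever $\cone(f_i) \in \mathcal{X}$ -- but this is immediate. Once that observation is in place, both implications reduce entirely to the already-established Lemma \ref{gen} together with Theorem \ref{main}, without any further use of the extriangulated axioms; in essence, Theorem \ref{main2} simply repackages Lemma \ref{gen} as a structural statement about (JHP) once (WJHP) is available for free.
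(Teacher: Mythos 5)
Your proposal is correct and follows essentially the same route as the paper: both directions pivot on the equivalence (1)$\Leftrightarrow$(5) in Lemma \ref{gen}, together with Theorem \ref{main} for (WJHP) and the splicing construction from \cite[Lemma 2.7]{Wa} for the necessity direction. The only cosmetic difference is that you phrase the sufficiency argument as an induction in $i$ showing $l_{\mathcal{X}}(M_{i+1}) = l_{\mathcal{X}}(M_i) + 1$, whereas the paper states the resulting identity $l_{\mathcal{X}}(M) = \sum_i l_{\mathcal{X}}(\cone(f_i)) = n$ directly; these are the same computation.
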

\begin{proof} Assume that $\mathcal{T}$ satisfies {\rm (JHP)}. As mentioned in the proof of Proposition \ref{JHP}, for any $\mathbb{E}$-triangle $A\stackrel{}{\longrightarrow}B\stackrel{}{\longrightarrow}C\stackrel{}\dashrightarrow$ in $\mathcal{T}$, there exists an $\mathcal{X}$-filtration of $B$ with length $l_{\mathcal{X}}(A)+l_{\mathcal{X}}(C)$. Since $\mathcal{T}$ satisfies {\rm (JHP)}, we obtain that $l_{\mathcal{X}}(B)=l_{\mathcal{X}}(A)+l_{\mathcal{X}}(C)$. By Lemma \ref{gen}(5), we get that $\mathcal{X}$ is proper.

Assume that $\mathcal{X}$ is a proper semibrick. Let $M\in\mathcal{T}$. By Theorem \ref{main}, we only need to prove all $\mathcal{X}$-filtrations of $M$ have the same length. Indeed, for any $\mathcal{X}$-filtration
\begin{equation*}
0=M_{0}\stackrel{f_{0}}{\longrightarrow}M_{1}\stackrel{f_{1}}{\longrightarrow}M_{2}\stackrel{f_{2}}{\longrightarrow}\cdots\stackrel{f_{n-1}}{\longrightarrow}M_{n}=M,
\end{equation*}
by Lemma \ref{gen}(5), we have that
$$l_{\mathcal{X}}(M)=\sum_{i=0}^{n-1}l_{\mathcal{X}}(\cone(f_{i}))=n.$$
That is, every $\mathcal{X}$-filtration of $M$ is of length $l_{\mathcal{X}}(M)$.
\end{proof}

\begin{corollary} An abelian category $\mathcal{A}$ satisfies {\rm (JHP)} if and only if $\mathcal{A}$ is a length abelian category.
\end{corollary}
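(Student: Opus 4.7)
The plan is to derive both implications directly from the machinery already established, with essentially no new work beyond recognizing that $\cim(\mathcal{A})$ is a proper semibrick in the abelian (hence exact) setting.

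For the forward direction, I would assume that $\mathcal{A}$ satisfies (JHP) and simply invoke Proposition \ref{JHP}(1), which gives $\mathcal{A} = \mathbf{Filt}_{\mathcal{A}}(\cim(\mathcal{A}))$, i.e., $\mathcal{A}$ is a length abelian category by definition. This part is immediate.

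For the reverse direction, I would assume $\mathcal{A} = \mathbf{Filt}_{\mathcal{A}}(\cim(\mathcal{A}))$. The first step is to observe that by the classical Schur's lemma for abelian categories, $\cim(\mathcal{A})$ is a semibrick: any non-zero morphism between simple objects has zero kernel and zero cokernel, hence is an isomorphism. The second step is to note that, since $\mathcal{A}$ is in particular an exact category, Remark \ref{2.2} tells us that any semibrick in $\mathcal{A}$ is automatically proper; in particular $\cim(\mathcal{A})$ is a proper semibrick. Finally, applying Theorem \ref{main2} to $\mathcal{X} = \cim(\mathcal{A})$ and $\mathcal{T} = \mathbf{Filt}_{\mathcal{A}}(\cim(\mathcal{A})) = \mathcal{A}$ yields that $\mathcal{A}$ satisfies (JHP).

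There is no real obstacle here; the corollary is a clean application of Proposition \ref{JHP}, Remark \ref{2.2}, and Theorem \ref{main2}. The only point that might merit an explicit remark is the appeal to Remark \ref{2.2}: in an abelian category every inflation is a monomorphism, so the condition (4) of Lemma \ref{gen} is automatic, which is precisely what makes every semibrick proper in the exact (and thus abelian) setting. Once this is pointed out, the proof reduces to a one-line citation in each direction.
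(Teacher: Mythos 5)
Your proof is correct and takes essentially the same route as the paper's: the forward direction via Proposition \ref{JHP} and the reverse direction by observing that $\cim(\mathcal{A})$ is a proper semibrick and invoking Theorem \ref{main2}. You merely spell out (via Schur's lemma and Remark \ref{2.2}) why $\cim(\mathcal{A})$ is a proper semibrick, a step the paper states without elaboration.
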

\begin{proof} If $\mathcal{A}$ satisfies {\rm (JHP)}, by Proposition \ref{JHP}, we have that $\Filt(\cim(\mathcal{A}))=\mathcal{A}$, i.e., $\mathcal{A}$ is a length abelian category. Conversely, if $\mathcal{A}$ is a length abelian category, i.e., $\Filt(\cim(\mathcal{A}))=\mathcal{A}$, noting that $\cim(\mathcal{A})$ is a proper semibrick in $\mathcal{A}$, we obtain that $\mathcal{A}$ satisfies {\rm (JHP)} by Theorem \ref{main2}.
\end{proof}

\begin{corollary} Let $\mathcal{X}$ be a semibrick in an exact category $\mathcal {E}$ and $\mathcal{T}=\Filt_{\mathcal {E}}(\mathcal{X})$. Then $\mathcal{T}$ satisfies {\rm (JHP)}.
\end{corollary}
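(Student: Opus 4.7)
The plan is to reduce to Theorem~\ref{main2}, which already characterizes (JHP) for $\mathcal{T}=\Filt_{\mathcal{E}}(\mathcal{X})$ as equivalent to the properness of $\mathcal{X}$. Hence it suffices to verify that, in the special case of an exact category, every semibrick is automatically proper --- this is the first equivalence recorded without justification in Remark~\ref{2.2}, which I would make explicit.

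Unwinding the definition, $\mathcal{X}$ is proper if for each $X \in \mathcal{X}$ there is no $\mathbb{E}$-triangle $X \longrightarrow 0 \longrightarrow N \dashrightarrow$ with $N \in \Filt_{\mathcal{E}}(\mathcal{X})$. In an exact category, $\mathbb{E}$-triangles are just short exact sequences (conflations), so a conflation with middle term $0$ forces both outer terms to vanish. Since any brick has a division endomorphism ring and is in particular nonzero, no such conflation $X \longrightarrow 0 \longrightarrow N$ can exist for $X \in \mathcal{X}$; hence $\mathcal{X}$ is proper, and Theorem~\ref{main2} yields the conclusion.

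No real obstacle arises: the argument is a two-line appeal to the previous theorem, whose only substantive input is the observation --- already noted just before Lemma~\ref{c1} --- that the $[1]$-shift is defined in an exact category only for the zero object.
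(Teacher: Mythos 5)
Your proof is correct and takes the same route as the paper: both reduce to Theorem~\ref{main2} and observe that in an exact category every semibrick is automatically proper (since a conflation with zero middle term forces both outer terms to vanish, as the $[1]$-shift exists only for the zero object). You merely spell out the justification for properness that the paper leaves implicit in Remark~\ref{2.2}.
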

\begin{proof}
We only need to note that each semibrick in an exact category is proper. Then by Theorem \ref{main2}, we complete the proof.
\end{proof}

Let $Q$ be a finite acyclic quiver. A {\em torsion free class} in $\mod kQ$ is a full subcategory which is closed under extensions and submodules. We remark that the reverse of Theorem \ref{main} may not hold, i.e., there exists a set $\mathcal{X}$ which is not a semibrick such that $\Filt_{\mathscr{C}}(\mathcal{X})$ satisfies {\rm (WJHP)}.

\begin{example}
Let $Q$ be the quiver $1\longrightarrow2\longleftarrow3$. The Auslander-Reiten quiver of $\mod kQ$ is as follows:
\begin{equation*}
\xymatrix@!=0.5pc{
   &  P_3\ar[dr]^{} & & S_1  \\
 P_2 \ar[dr]\ar[ur]  & &  I_2\ar[dr]^{}\ar[ur]^{}&     \\
  &  P_1\ar[ur]^{} & &  S_3  }
  \end{equation*}
Set $\mathcal{T}=\add\{P_1,P_2,P_3,I_2,S_3\}$. Then $\cim(\mathcal{T})=\{P_1,P_2,S_3\}$. By \cite[Corollary 5.19]{En2}, $\mathcal{T}$ is a torsion-free class satisfying {\rm (JHP)}. Since $\Hom_{{\rm \textbf{mod}}kQ}(P_2,P_1)\neq0$, $\cim(\mathcal{T})$ is not a semibrick.
\end{example}

As we have said in Remark \ref{2.2}, the notions of semibricks, proper semibricks and simple semibricks are coincident in exact categories. However, they are different in  triangulated categories.
\begin{example}\label{4-2}
Let $A$ be the path algebra of the quiver $1\longrightarrow2\longrightarrow3$. The Auslander-Reiten quiver $\Gamma$ of the bounded derived category $D^{b}(A)$ is as follows:
\begin{equation*}
\xymatrix@!=0.5pc{
   && S_3[-1]\ar[dr]  && S_2[-1]\ar[dr] && S_1[-1]\ar[dr] && P_1\ar[dr] && \\
   &&\cdots\cdots\quad& P_2[-1]\ar[dr]\ar[ur] && I_2[-1]\ar[ur]\ar[dr] && P_2\ar[ur]\ar[dr] && I_2\ar[dr] & \cdots\cdots\\
   &&&& P_1[-1]\ar[ur] && S_3\ar[ur] && S_2\ar[ur] && S_1}
\end{equation*}

$(1)$ Clearly, the set $\mathcal{X}_0$ consisting of the isoclasses of objects in the top row of $\Gamma$ is a semibrick in $D^{b}(A)$ and $\Filt_{D^{b}(A)}(\mathcal{X}_0)=D^{b}(A)$.  By Theorem \ref{main}, $D^{b}(A)$ satisfies {\rm (WJHP)}.

$(2)$ Let $\mathcal{X}_1$ be the set consisting of the isoclasses of objects in
$\{S_3[-1],S_2[-1],S_1[-1],P_{1}\}$, and it is a semibrick in $D^{b}(A)$.
Noting that $S_{3}\in\Filt_{D^{b}(A)}(\mathcal{X}_1)$, we obtain that $\mathcal{X}_1$ is not proper. Since $P_{2}[-1]\stackrel{}{\longrightarrow}S_2[-1]\stackrel{}{\longrightarrow}S_{3}\stackrel{}{\longrightarrow} P_{2}$ is a triangle in $\Filt_{D^{b}(A)}(\mathcal{X}_1)$, we get that $S_2[-1]$ is not simple in $\Filt_{D^{b}(A)}(\mathcal{X}_1)$ and thus $\mathcal{X}_1$ is not simple. By Theorem \ref{main2}, {\rm (JHP)} fails in $\Filt_{D^{b}(A)}(\mathcal{X}_1)$.

$(3)$ Let $\mathcal{X}_2$ be the set consisting of the isoclasses of objects in
$\{S_2[-1],S_1[-1],P_{1}\}$.
Clearly, $\mathcal{X}_2$ is a proper semibrick. By Theorem \ref{main2}, $\Filt_{D^{b}(A)}(\mathcal{X}_2)$ satisfies {\rm (JHP)}.

$(4)$ Let $\mathcal{X}_3$ be the set consisting of the isoclasses of objects in
$\{S_2[-1],S_1[-1],S_1\}$.
Clearly, $\mathcal{X}_3$ is a simple semibrick, but not proper.
\end{example}

\begin{example}
Let $\Lambda$ be the path algebra of the quiver $n\longrightarrow n-1\longrightarrow\cdots\longrightarrow 1$.

$(1)$ Let $X,Y\in\mod \Lambda$ be indecomposable.
\begin{itemize}
\item [(i)] If $j-i=0$, then $\Hom_{ D^{b}(\Lambda)}(X[i],Y[j])\cong\Hom_{\Lambda}(X,Y)$ and $\Hom_{ D^{b}(\Lambda)}(Y[j],X[i])\cong\Hom_{\Lambda}(Y,X)$.
\item [(ii)] If $j-i=1$, then  $\Hom_{ D^{b}(\Lambda)}(X[i],Y[j])\cong\Ext^{1}_{\Lambda}(X,Y)$ and $\Hom_{ D^{b}(\Lambda)}(Y[j],X[i])=0$.
\item [(iii)] If $j-i=-1$, then $\Hom_{ D^{b}(\Lambda)}(X[i],Y[j])=0$ and $\Hom_{ D^{b}(\Lambda)}(Y[j],X[i])\cong\Ext^{1}_{\Lambda}(Y,X)$.
\item [(iv)] If $|j-i|\geq2$, then $\Hom_{ D^{b}(\Lambda)}(X[i],Y[j])=\Hom_{ D^{b}(\Lambda)}(Y[j],X[i])=0$.
\end{itemize}

Hence, $\{X[i],Y[j]\}$ is a semibrick in $D^{b}(\Lambda)$ if and only if one of the following conditions holds:
\begin{itemize}
\item [(i)] If $j-i=0$, then $\{X,Y\}$ is a semibrick.
\item [(ii)] If $j-i=1$, then  $\Ext^{1}_{\Lambda}(X,Y)=0$.
\item [(iii)] If $j-i=-1$, then $\Ext^{1}_{\Lambda}(Y,X)=0$.
\item [(iv)] $|j-i|\geq2$.
\end{itemize}

$(2)$ For $0\leq i< j\leq n$, we denote by $M_{ij}$ the indecomposable $\Lambda$-module whose  representation $(M_{i},\varphi_{i})$ is given by
\begin{equation*}0\longrightarrow\cdots\longrightarrow0\longrightarrow k\stackrel{1}\longrightarrow\cdots \stackrel{1}\longrightarrow k\longrightarrow0\longrightarrow\cdots \longrightarrow 0.\end{equation*}
That is, $M_{l}=k$ if $i< l\leq j$ and $M_{l}=0$ otherwise.

Given an indecomposable $\Lambda$-module $X$, we set
\begin{flalign*}&H_{+}^{0}(X)=\{Y\in\ind(\mod \Lambda)~|~\Hom_{\Lambda}(X,Y)\neq0\},\\
&H_{-}^{0}(X)=\{Y\in\ind(\mod \Lambda)~|~\Hom_{\Lambda}(Y,X)\neq0\},\\
&H_{+}^{1}(X)=\{Y\in\ind(\mod \Lambda)~|~\Ext^{1}_{\Lambda}(X,Y)\neq0\},\\
&H_{-}^{1}(X)=\{Y\in\ind(\mod \Lambda)~|~\Ext^{1}_{\Lambda}(Y,X)\neq0\}.\end{flalign*}
Indeed, by \cite[Lemma 3.1]{Ar}, we have that
\begin{flalign*}&H_{+}^{0}(M_{ij})=\{M_{st}~|~i\leq s\leq j-1,j\leq t\leq n\},\\
&H_{-}^{0}(M_{ij})=\{M_{st}~|~0\leq s\leq i,i+1\leq t\leq j\},\\
&H_{+}^{1}(M_{ij})=\{M_{st}~|~0\leq s\leq i-1,i\leq t\leq j-1\},\\
&H_{-}^{1}(M_{ij})=\{M_{st}~|~i+1\leq s\leq j,j+1\leq t\leq n\}.\end{flalign*}

$(3)$ Let $\mathcal{X}$ be a set of isoclasses of objects in $D^{b}(\Lambda)$. By definition,  $\mathcal{X}$ is a semibrick if and only if $\{X,Y\}$ is a semibrick for any $X,Y\in\mathcal{X}$. Set $X=M_{ij}[k]$ and $Y=M_{st}[l]$ for some $M_{ij},M_{st}\in\ind(\mod \Lambda)$ and $k,l\in\mathbb{Z}$. Using $(1)$ and $(2)$ (seeing the diagram in \cite[Lemma 3.1]{Ar}), $\{X,Y\}$ is a semibrick if and only if one of the following conditions holds:
\begin{itemize}
\item [(a)] If $l-k=0$, then $\{M_{ij},M_{st}\}$ is a semibrick in $\mod\Lambda$, i.e., $M_{st}\notin H_{+}^{0}(M_{ij})\cup H_{-}^{0}(M_{ij})$. So, we have that
\begin{itemize}
\item [(i)] If $0\leq s\leq i-1$, then $s+1\leq t\leq i$ or $j+1\leq t\leq n$.
\item [(ii)] If $i+1\leq s\leq j-2$, then  $s+1\leq t\leq j-1$.
\item [(iii)]  If $j\leq s\leq n$, then $s+1\leq t\leq n$.
\end{itemize}
In particular, $s\neq i$ and $s\neq j-1$.
\item [(b)] If $l-k=1$, then  $\Ext^{1}_{\Lambda}(M_{ij},M_{st})=0$, i.e., $M_{st}\notin H_{+}^{1}(M_{ij})$. So, we have that
\begin{itemize}
\item [(i)] If $0\leq s\leq i-1$, then $s+1\leq t\leq i-1$ or $j\leq t\leq n$.
\item [(ii)] If $i\leq s\leq n$, then  $s+1\leq t\leq n$.
\end{itemize}
\item [(c)] If $l-k=-1$, then  $\Ext^{1}_{\Lambda}(M_{st},M_{ij})=0$,  i.e., $M_{st}\notin H_{-}^{1}(M_{ij})$. So, we have that
\item [(i)] If $0\leq s\leq i$, then $s+1\leq t\leq n$.
\item [(ii)] If $i+1\leq s\leq j-1$, then  $s+1\leq t\leq j$.
\item [(iii)] If $j+1\leq s\leq n$, then $s+1\leq t\leq n$.

   \item [(d)]  $|l-k|\geq2$.
\end{itemize}
In particular, by (a), $\mathcal{X}$ is a semibrick in $\mod\Lambda$ if and only if it satisfies the following conditions
\begin{itemize}
\item [(i)] $\mathcal{X}=\{M_{x_{i},y_{i}}\}_{i=1}^{t}$ with $x_{1}<x_{2}<\cdots< x_{t}$ and $t\leq n$.
\item [(ii)] For any $1\leq j<i\leq t$, either $y_{j}\leq x_{i}$ or $y_{j}\geq y_{i}+1$.
\end{itemize}
By \cite[Theorem 4.1]{Asa}, the number of semibricks in $\mod \Lambda$ is equal to the {\em Catalan number}
 $$\frac{1}{n+2}\tiny\begin{pmatrix} 2n+2 \\n+1 \end{pmatrix}.$$
\end{example}

\section{Torsion-free classes satisfying  {\rm (JHP)}}
In this section, we investigate the torsion-free classes satisfying  {\rm (JHP)}  for a quiver of type $A$ by using reflection functors, $c$-sortable elements and Coxeter groups.

\subsection{Reflection functors}
In this subsection, let $Q$ be a finite acyclic quiver.  We say that a vertex $i$ of $Q$ is a {\em sink} if all the arrows incident to $i$ point towards
$i$. Dually, we say that $i$ is a {\em source} if all arrows incident to $i$ point away from it. If $i$ is a sink or a source, we define $\mu_{i}(Q)$ to be the quiver obtained by changing the direction of all the arrows incident to $i$.
If $i$ is a sink of $Q$, then there is a functor $R^{+}_{i}:\rep(Q)\rightarrow\rep(\mu_{i}(Q))$ defined by
$$R^{+}_{i}(M)_{j}=\left\{
\begin{aligned}
~M_{j}, &  & \text{if}~j\neq i, \\
\ker(\bigoplus_{i'\rightarrow i\in Q}M_{i'}\rightarrow M_{i}), &  & \text{if}~j=i.
\end{aligned}
\right.
$$
for any $M\in\rep(Q)$. Similarly, if $i$ is a source of $Q$, we also have a functor $R^{-}_{i}:\rep(Q)\rightarrow\rep(\mu_{i}(Q))$ (cf. \cite{Th}).

Let us collect some properties of $R^{+}_{i}$ and $R^{-}_{i}$, which will be used in the sequel.

\begin{proposition}\label{D11} Let $i$ be a sink of $Q$ and $M\in\rep(Q)$.

$(1)$  If $M\cong S_{i}$, then $R^{+}_{i}(M)=0$. If $M$ has no direct summand isomorphic to $S_{i}$, then
$$\dim (R^{+}_{i}(M))_{j}=\left\{
\begin{aligned}
~\dim M_{j}, &  & \text{if}~j\neq i, \\
(\sum_{i'\rightarrow i\in Q}\dim M_{i'})-\dim M_{i}, &  & \text{if}~j=i.
\end{aligned}
\right.
$$

$(2)$ $R^{+}_{i}$ is left exact and $R^{-}_{i}$ is right exact.

$(3)$ $R_{i}^{-}R_{i}^{+}(M)$ is isomorphic to the direct sum of the indecomposable summands of $M$ which are not isomorphic to $S_{i}$. Similarly, if $i$ is a source of $Q$, then $R_{i}^{+}R_{i}^{-}(M)$ is isomorphic to the direct sum of the indecomposable summands of $M$ which are not isomorphic to $S_{i}$.

$(4)$ Let $\mathcal{F}$ be a subcategory of $\rep(Q)$. If $S_{i}\in \mathcal{F}$, then $\mathcal{F}=\add\{\ind(R_{i}^{-}R_{i}^{+}(\mathcal{F}))\cup\{S_{i}\}\}$. Otherwise, $\mathcal{F}=\add\{\ind(R_{i}^{-}R_{i}^{+}(\mathcal{F}))\}$.

$(5)$ If $\mathcal{F}$ is a torsion-free class in $\rep(Q)$, then $R_{i}^{+}(\mathcal{F})$ is a torsion-free class in $\rep(\mu_{i}(Q))$ which
does not contain $S'_{i}$, where $S'_{i}$ is the i-th simple representation of $\mu_{i}(Q)$.

\end{proposition}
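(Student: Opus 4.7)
The plan is to treat parts (1)--(4) as direct computations flowing from the definition of $R_i^+$, and then to handle (5), the substantive statement, via the near-equivalence established in (3).

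For (1), the case $M \cong S_i$ is immediate since $M_{i'} = 0$ for every $i' \neq i$. When $M$ has no $S_i$-summand, the evaluation map $\bigoplus_{i' \to i} M_{i'} \to M_i$ must be surjective: otherwise its cokernel would yield a nonzero $S_i$-summand of $M$. The dimension formula then falls out of the defining short exact sequence $0 \to (R_i^+ M)_i \to \bigoplus_{i' \to i} M_{i'} \to M_i \to 0$. For (2), $R_i^+$ is the identity at vertices $j \neq i$ and a componentwise kernel at $i$, so its left-exactness reduces to the left-exactness of $\ker$; right-exactness of $R_i^-$ is dual. For (3), I would check on each indecomposable summand: $R_i^+$ annihilates $S_i$, and for any other indecomposable $M$ the natural map $R_i^- R_i^+(M) \to M$ is an isomorphism, verified by matching dimension vectors via (1) and unpacking the structure maps at $i$. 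Part (4) is then immediate from (3), since $R_i^- R_i^+(\mathcal{F})$ captures exactly the non-$S_i$ indecomposable summands occurring in $\mathcal{F}$.

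For (5), the core observation is that on the subcategories of $\rep(Q)$ and $\rep(\mu_i(Q))$ consisting of representations without $S_i$- and $S_i'$-summands respectively, $R_i^+$ and $R_i^-$ restrict to mutually inverse equivalences. First, $S_i' \notin R_i^+(\mathcal{F})$ because by (1) any $M$ with $R_i^+(M) \cong S_i'$ would satisfy $M_j = 0$ for $j \neq i$, hence $M = S_i^k$ and $R_i^+(M) = 0$. For submodule-closure, I need the sharper fact that when $M \in \mathcal{F}$ has no $S_i$-summand, $R_i^+(M)$ admits no $S_i'$-subrepresentation at all: any such inclusion corresponds to an element of $\ker(\bigoplus_{i' \to i} M_{i'} \to M_i)$ whose projection to each $M_{i'}$ must vanish (since $i$ is a source in $\mu_i(Q)$ and $S_i'$ has zero outgoing structure maps), forcing the element to be zero. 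Hence a submodule $N \hookrightarrow R_i^+(M)$ can be written $N = R_i^+(N')$ with $N' \hookrightarrow M$ in $\rep(Q)$ via the equivalence, and $N' \in \mathcal{F}$ by torsion-freeness. Extension-closure is treated analogously: an extension $0 \to R_i^+(A) \to X \to R_i^+(B) \to 0$ in $\rep(\mu_i(Q))$ cannot produce an $S_i'$-summand in $X$ by the same submodule argument applied to $X$, so $X = R_i^+(X')$ for some $X' \in \rep(Q)$ fitting in an extension between the $S_i$-summand-free parts of $A$ and $B$, both of which lie in $\mathcal{F}$ by submodule-closure; extension-closure of $\mathcal{F}$ then places $X'$ in $\mathcal{F}$.

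The main obstacle will be the bookkeeping around $S_i$- and $S_i'$-summands: the equivalence in (3) is valid only on the summand-free subcategories, so one must repeatedly pass to these subcategories and verify that all adjustments remain inside $\mathcal{F}$ and its image under $R_i^+$. It is the submodule-closure part of torsion-freeness that permits stripping off $S_i$-summands while staying in $\mathcal{F}$, and this is what ultimately makes (5) go through.
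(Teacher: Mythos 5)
Your proofs of (1)--(4) are essentially correct, and for (5) your strategy --- reducing to the near-equivalence from (3), checking $S_i'\notin R_i^+(\mathcal{F})$, and handling submodules and extensions by lifting along $R_i^-$ --- is genuinely different from the paper, which merely cites Thomas's survey (\cite[Props.\ 3.1, 4.1, 4.2]{Th}) and gives no argument. However, there is a real gap in the submodule-closure step of (5), and as a side effect of it you have silently glossed over a hypothesis that the statement actually needs.

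The problematic step is the claim that a subrepresentation $N\hookrightarrow R_i^+(M)$ can be written as $N=R_i^+(N')$ with $N'\hookrightarrow M$ ``via the equivalence.'' The equivalence of (3) is between summand-free subcategories; it does not take monomorphisms to monomorphisms, because $R_i^-$ is only right exact. Concretely, take $Q=1\to2\leftarrow3$, $i=2$, and $M=I_2$ with dimension vector $(1,1,1)$. Then $R_2^+(M)$ again has dimension vector $(1,1,1)$, and $N=S_1'\oplus S_3'$ (dimension vector $(1,0,1)$, with $N_2=0$) is a subrepresentation of $R_2^+(M)$. But $R_2^-(N)$ has dimension vector $(1,2,1)$, and no subrepresentation $N'$ of $M$ can satisfy $R_2^+(N')\cong N$: you would need $\ker(N'_1\oplus N'_3\to N'_2)=0$ with $N'_1=N'_3=k$, forcing $\dim N'_2\geq 2$, impossible inside $M_2=k$. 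So the displayed reduction does not go through. The correct statement is that the canonical map $R_i^-(N)\to M$ is surjective onto a subrepresentation $L\subseteq M$ with kernel concentrated at vertex $i$, i.e.\ a finite direct sum of copies of $S_i$; one then gets $0\to S_i^{\,m}\to R_i^-(N)\to L\to 0$ and concludes $R_i^-(N)\in\mathcal{F}$ (hence $N=R_i^+R_i^-(N)\in R_i^+(\mathcal{F})$) using \emph{extension}-closure of $\mathcal{F}$ together with the additional hypothesis $S_i\in\mathcal{F}$. That hypothesis is genuinely needed and is missing from your argument (and from the statement as printed): for $Q=1\to2$, $i=2$, $\mathcal{F}=\mathrm{add}\{S_1\}$ is a torsion-free class with $S_2\notin\mathcal{F}$, and $R_2^+(\mathcal{F})=\mathrm{add}\{P_2'\}$ is not closed under subrepresentations since $S_1'\subsetneq P_2'$. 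The paper only ever applies (5) in the situation $l(s_iw)=l(w)-1$, which by Lemma~\ref{m0} forces $S_i\in\mathcal{F}$, so the usage is sound, but a self-contained proof must invoke this. Finally, in the extension step you should also justify that applying $R_i^-$ to $0\to R_i^+(A')\to X\to R_i^+(B')\to 0$ yields a short exact sequence: the injectivity of $R_i^-(A)\to R_i^-(X)$ is not automatic (again, $R_i^-$ is only right exact); it follows from a snake-lemma computation using that $B=R_i^+(B')$ has injective structure map $B_i\hookrightarrow\bigoplus B_{i'}$, i.e.\ no $S_i'$-subrepresentation.
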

\begin{proof} For (1)--(4), one can see for example \cite[Proposition 3.1]{Th}. Using \cite[Proposition 4.1]{Th} together with \cite[Proposition 4.2]{Th}, we get $(5)$.
\end{proof}

\begin{proposition}\label{MM1}  Let $i$ be a sink of $Q$ and let $\mathcal{F}$ be a torsion-free class in $\rep(Q)$. Set $\mathcal{F}'=R^{+}_{i}(\mathcal{F})$.

$(1)$ The functor $R_{i}^{+}$ gives a bijection between $\cim(R_{i}^{-}R_{i}^{+}(\mathcal{F}))$ and $\cim(\mathcal{F}')$.

$(2)$ If $M\in\cim(\mathcal{F})$, then $R^{+}_{i}(M)=0$ or $R^{+}_{i}(M)\in\cim(\mathcal{F'})$.

$(3)$ If $S_{i}\notin \mathcal{F}$, then $|\cim(\mathcal{F})|=|\cim(\mathcal{F}')|$. Otherwise, $|\cim(\mathcal{F})|-|\cim(\mathcal{F}')|=$
$$1-|\{M~|~M\in\cim(R_{i}^{-}R_{i}^{+}(\mathcal{F}))~{\text and}~M\notin\cim(\mathcal{F})\}|.$$

$(4)$  Assume that $M\in\cim(R_{i}^{-}R_{i}^{+}(\mathcal{F}))$. If $S_{i}\in \mathcal{F}$, the following are equivalent.
\begin{itemize}
\item [(i)] $M\in\cim(\mathcal{F})$.
\item [(ii)]  There exists no monomorphism $f:S_{i}\rightarrow M$ such that $\coker(f)\in\mathcal{F}$.
\end{itemize}
\end{proposition}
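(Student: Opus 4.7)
The plan is to make part (1) the workhorse: establish that $R_i^+$ induces an equivalence of extriangulated categories from $R_i^-R_i^+(\mathcal{F})$ onto $\mathcal{F}'$, deduce parts (2) and (3) essentially for free, and then handle (4) by hand using the projectivity of $S_i$ at a sink. For (1), essential surjectivity of $R_i^+\colon R_i^-R_i^+(\mathcal{F})\to\mathcal{F}'$ follows from the identity $R_i^+=R_i^+\circ R_i^-\circ R_i^+$, which combines Proposition~\ref{D11}(3) with $R_i^+(S_i)=0$; full faithfulness on indecomposables reduces to the already-established bijective correspondence between indecomposables of $\rep(Q)$ without an $S_i$-summand and those of $\rep(\mu_i(Q))$ without an $S_i'$-summand, and $\mathcal{F}'$ lies in the latter class by Proposition~\ref{D11}(5). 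To transport simplicity I must verify that short exact sequences between objects free of $S_i$-summands are preserved and reflected; the left/right exactness in Proposition~\ref{D11}(2), combined with the equivalence on objects, forces $R_i^+$ and $R_i^-$ to be exact on these subcategories, and in particular shows that $R_i^-R_i^+(\mathcal{F})$ is extension-closed inside $\mathcal{F}$ (if a middle term $X$ had an $S_i$-summand, the split surjection $X\to S_i$ would factor through the cokernel $C$ and then split off an $S_i$-summand of $C$ by projectivity, contradicting the hypothesis on $C$).

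Granted (1), part (2) is immediate: if $M\cong S_i$ then $R_i^+(M)=0$; otherwise $M$ is indecomposable and not isomorphic to $S_i$, so $M\in R_i^-R_i^+(\mathcal{F})$ by Proposition~\ref{D11}(4), and any nontrivial conflation of $M$ inside $R_i^-R_i^+(\mathcal{F})\subseteq\mathcal{F}$ is already a nontrivial conflation in $\mathcal{F}$, forcing $M\in\cim(R_i^-R_i^+(\mathcal{F}))$; part (1) then yields $R_i^+(M)\in\cim(\mathcal{F}')$. For (3), the case $S_i\notin\mathcal{F}$ is immediate because Proposition~\ref{D11}(4) gives $R_i^-R_i^+(\mathcal{F})=\mathcal{F}$. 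When $S_i\in\mathcal{F}$, set $A=\cim(\mathcal{F})$ and $B=\cim(R_i^-R_i^+(\mathcal{F}))$; since $S_i\in A\setminus B$ and (2) gives $A\setminus\{S_i\}\subseteq B$, we have $|A\cap B|=|A|-1$ and hence $|B\setminus A|=|B|-|A|+1$, which together with $|B|=|\cim(\mathcal{F}')|$ from (1) rearranges to the stated identity.

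For (4), the direction (i)$\Rightarrow$(ii) is direct: a monomorphism $f\colon S_i\to M$ with $\coker(f)\in\mathcal{F}$ yields a conflation $0\to S_i\to M\to\coker(f)\to 0$ in $\mathcal{F}$, and simplicity of $M$ forces $\coker(f)=0$, i.e.~$M\cong S_i$, which $M\in R_i^-R_i^+(\mathcal{F})$ forbids. For (ii)$\Rightarrow$(i), given a nontrivial conflation $0\to A\to M\to C\to 0$ in $\mathcal{F}$, simplicity of $M$ in $R_i^-R_i^+(\mathcal{F})$ forces an $S_i$-summand of $A$ or of $C$. If $A=S_i\oplus A'$, the composite $S_i\hookrightarrow A\hookrightarrow M$ is a monomorphism whose cokernel $M/S_i$ sits in an extension of $A'$ by $C$ and so lies in $\mathcal{F}$, contradicting (ii). If $C=S_i\oplus C'$, the composition $M\twoheadrightarrow C\twoheadrightarrow S_i$ is surjective; because $i$ is a sink, $S_i$ is projective in $\rep(Q)$, so the short exact sequence $0\to\ker\to M\to S_i\to 0$ splits, making $S_i$ a direct summand of $M$ and contradicting $M\in R_i^-R_i^+(\mathcal{F})$. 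The most delicate step is verifying exactness of $R_i^+$ on $R_i^-R_i^+(\mathcal{F})$ in (1), where I must rule out that $R_i^+$, only left exact in general, fails right exactness on this subcategory; the second subcase of (4) is the other delicate moment, being precisely where the hypothesis that $i$ is a sink (and hence $S_i$ is projective) is indispensable.
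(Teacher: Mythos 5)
Your approach is genuinely different from the paper's, most visibly in parts (1) and (4). For (1), the paper argues directly: it takes a conflation in $\mathcal{F}'$ around $R_i^+(M)$, applies the right-exact $R_i^-$, and identifies the resulting kernel-cokernel data via $R_i^-R_i^+\cong\mathrm{id}$; for the converse it runs the dual argument with the left-exact $R_i^+$, at each stage using that $\mathcal{F}'$ (a torsion-free class) is closed under submodules so the middle term of the induced short exact sequence stays in the subcategory. You instead package the whole thing as an exact equivalence $R_i^+\colon R_i^-R_i^+(\mathcal{F})\to\mathcal{F}'$, from which the bijection on simples, and parts (2) and (3), fall out formally. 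This is a cleaner conceptual frame, and your bookkeeping in (3) (via $A=\cim(\mathcal{F})$, $B=\cim(R_i^-R_i^+(\mathcal{F}))$, $A\setminus\{S_i\}\subseteq B$) is correct. Your proof of (4)(ii)$\Rightarrow$(i) is also different and, in my view, more transparent than the paper's: the paper applies $R_i^+$ and uses simplicity of $R_i^+(M)$ in $\mathcal{F}'$, whereas you argue entirely inside $\mathcal{F}$, splitting on which end of the conflation carries an $S_i$-summand and exploiting projectivity of $S_i$ at a sink twice (once to kill the composite $A\to X\to S_i$, once to split $M\twoheadrightarrow S_i$).

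The one place you should tighten is exactly the step you flag as delicate: the assertion that ``left/right exactness combined with the equivalence on objects forces $R_i^+$ and $R_i^-$ to be exact on these subcategories.'' That does not follow on general grounds, because neither subcategory is abelian and $R_i^-R_i^+(\mathcal{F})$ is not closed under submodules. Preservation of conflations works as follows: given $0\to A\to B\to C\to 0$ in $R_i^-R_i^+(\mathcal{F})$, left exactness gives $0\to R_i^+A\to R_i^+B\to R_i^+C$; the image $E$ of $R_i^+B$ in $R_i^+C$ lies in $\mathcal{F}'$ because $\mathcal{F}'$ is closed under submodules, and applying the right-exact $R_i^-$ to $0\to R_i^+A\to R_i^+B\to E\to 0$ together with the natural isomorphisms $R_i^-R_i^+\cong\mathrm{id}$ and $R_i^+R_i^-\cong\mathrm{id}$ on the relevant subcategories yields $E\cong R_i^+C$. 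For reflection of conflations the analogous argument breaks down because $R_i^-R_i^+(\mathcal{F})$ is not closed under submodules; the quickest fix is a dimension-vector count using $\underline{\dim}\,R_i^+(X)=s_i\,\underline{\dim}\,X$ for $X$ without $S_i$-summand. Similarly, ``full faithfulness reduces to the bijection on indecomposables'' is too quick: a bijection on objects alone does not give an isomorphism on Hom-spaces; you need the naturality of the counit/unit of the BGP adjunction on the subcategories, which is true but not contained in the cited Proposition. With these two verifications spelled out, your argument is complete and does buy a more uniform derivation of (1)--(3) than the paper's term-by-term chase.
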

\begin{proof} (1) First, we show that $R_{i}^{+}(M)\in\cim(\mathcal{F}')$ for $M\in\cim(R_{i}^{-}R_{i}^{+}(\mathcal{F}))$. Note that, since $M\ncong S_{i}$, $R_{i}^{-}R_{i}^{+}(M)\cong M$. Take any exact sequence $$0\longrightarrow M_{1}\longrightarrow R_{i}^{+}(M)\stackrel{f}\longrightarrow M_{2}\longrightarrow0$$
in $\mathcal{F}'$. Then $R_{i}^{-}(M_{1})\longrightarrow M\stackrel{R_{i}^{-}(f)}\longrightarrow R_{i}^{-}(M_{2})\longrightarrow0$ is exact in $R_{i}^{-}(\mathcal{F}')=R_{i}^{-}R_{i}^{+}(\mathcal{F})$ since $R_{i}^{-}$ is right exact. Since $M\in\cim(R_{i}^{-}R_{i}^{+}(\mathcal{F}))$, either $\ker(R_{i}^{-}(f))=0$ or $R_{i}^{-}(M_{2})=0$. For the former, we have that $M\cong R_{i}^{-}(M_{2})$, and then $R_{i}^{+}(M)\cong R_{i}^{+}R_{i}^{-}(M_{2})\cong M_{2}$, thus $M_1=0$. For the latter, we have that $M_{2}=0$. Hence, $R_{i}^{+}(M)$ is a simple object in $\cim(\mathcal{F}')$.

Conversely, if $M\in\cim(\mathcal{F'})$, we prove that $R^{-}_{i}(M)\in\cim(R_{i}^{-}R_{i}^{+}(\mathcal{F}))$. Take an exact sequence $0\longrightarrow M_{1}\stackrel{f}\longrightarrow R_{i}^{-}(M)\stackrel{}\longrightarrow M_{2}\longrightarrow0$ in $R_{i}^{-}R_{i}^{+}(\mathcal{F})$. Noting that $S_{i}\notin R_{i}^{-}R_{i}^{+}(\mathcal{F})$ and $R^{+}_{i}$ is left exact, we have that $0\longrightarrow R_{i}^{+}(M_{1})\stackrel{R_{i}^{+}(f)}\longrightarrow M\longrightarrow R_{i}^{+}M_{2}$ is exact in $R_{i}^{+}R_{i}^{-}(\mathcal{F'})\subseteq\mathcal{F'}$ and thus either $\coker(R_{i}^{+}(f))=0$ or $R_{i}^{+}(M_{1})=0$. If $\coker(R_{i}^{+}(f))=0$, then $ R_{i}^{+}(M_{1})\cong M$ and $M_{1}\cong R_{i}^{-}R_{i}^{+}(M_{1})\cong R_{i}^{-}(M)$, thus $M_2=0$. If $R_{i}^{+}(M_{1})=0$, then $M_{1}=0$. Hence, $R^{-}_{i}(M)\in\cim(R_{i}^{+}R_{i}^{-}(\mathcal{F}))$.

$(2)$ If $M\cong S_{i}$, then $R^{+}_{i}(M)=0$. Otherwise, by (1), $R_{i}^{+}(M)\in\cim(\mathcal{F}')$.

$(3)$ It follows from $(1)$ and $(2)$.

$(4)$ $\rm (i)\Rightarrow(ii)$ Suppose that there is a monomorphism $S_{i}\rightarrow M$ such that $\coker(f)\in\mathcal{F}$, then we have that $S_{i}\cong M\in R_{i}^{-}R_{i}^{+}(\mathcal{F})$ since $M\in\cim(\mathcal{F})$.  This is a contradiction.

$\rm (ii)\Rightarrow(i)$ Take any exact sequence $0\longrightarrow M_{1}\stackrel{f}\longrightarrow M\stackrel{}\longrightarrow M_{2}\longrightarrow0$ in $\mathcal{F}$. We may assume that $M_{1}$ is indecomposable. Then $0\longrightarrow R_{i}^{+}(M_{1})\stackrel{R_{i}^{+}(f)}\longrightarrow R_{i}^{+}(M)\longrightarrow R_{i}^{+}(M_{2})$ is exact in $\mathcal{F}'$. Since $M\in\cim(R_{i}^{-}R_{i}^{+}(\mathcal{F}))$, by (1) we have that $R_{i}^{+}(M)\in\cim(\mathcal{F}')$. Thus, $R_{i}^{+}(M_{1})=0$ or $\coker(R_{i}^{+}(f))=0$. If $R_{i}^{+}(M_{1})=0$, then $R_{i}^{-}R_{i}^{+}(M_{1})=0$ and hence $M_{1}=0$ or $M_{1}\cong S_{i}$. By hypothesis, we know that $M_{1}\ncong S_{i}$ and thus $M_{1}=0$. If  $\coker(R_{i}^{+}(f))=0$, then $R_{i}^{+}(M_{1})\cong R_{i}^{+}(M)$ and thus $M\cong R_{i}^{-}R_{i}^{+}(M)\cong R_{i}^{-}R_{i}^{+}(M_{1})$. Since $M_{1}\ncong S_{i}$, we get that $M\cong M_{1}$ and $M_2=0$. This finishes the proof.
\end{proof}

\subsection{Coxeter groups }

Let us recall some combinatorial notions concerning Coxeter groups and quiver representations from \cite{En2} and \cite{Th}.

Let $Q$ be an $A$-type quiver with $n$ vertices and let $W$ be the {\em Coxeter group} generated by the {\em simple reflections} $s_{i}$ (cf. \cite[Section 1.2]{Th}). In this case, there is a group isomorphism from $W$ to the symmetric group $S_{n+1}$ on $n+1$ letters, which is given by sending $s_i$ to the adjacent transposition $(i~i+1)$. Hence, in what follows, we identify $W$ with $S_{n+1}$, and each $s_i$ with $(i~i+1)$.
We may use the {\em one-line notation} to represent elements of $S_{n+1}$. Explicitly, for any $w\in S_{n+1}$, we may write $w=w(1)w(2)\cdots w(n+1)$. Each element of $S_{n+1}$ can be expressed as a product of the simple reflections $s_i=(i~i+1)$. Such an expression for $w$ of minimal length is called {\em reduced}. We write
$l(w)$ for the length of a reduced expression for $w$. For any $w\in S_{n+1}$, we say $i\in\{1,\cdots,n\}$ is a {\em support} of $w$ if there exists some reduced expression of $w$ which contains $s_{i}$. Note that, if $i\in\supp(w)$, then  any reduced expression of $w$ contains $s_{i}$. We denote by $\supp(w)$ the set of all supports of $w$. Note that the support $\supp(e)$ of the unit element $e$ in $S_{n+1}$ is empty. For a subcategory $\mathcal{T}$ of $\rep(Q)$, we set $$\supp(\mathcal{T})=\{i~|~\text{there exists}~M\in\mathcal{T}~\text{such that}~M_{i}\neq0\}.$$
A {\em Coxeter element} of $S_{n+1}$ is an element $c\in S_{n+1}$ which is obtained as the product of all simple reflections $s_{1},\cdots s_{n}$ in some order. In particular, we can define a Coxeter element $c_{Q}\in S_{n+1}$ such that $s_{i}$ appears before $s_{j}$ in $c_{Q}$ whenever there exists an arrow  $j\rightarrow i$ in $Q$. Given a Coxeter element $c$ of $S_{n+1}$, we say that an element $w\in S_{n+1}$ is {\em $c$-sortable}  if there exists a reduced expression of the form $w=c^{0}c^{1}\cdots c^{t}$ such that each $c^{i}$ is a subword of $c$ satisfying $\supp(c^{0})\supset\supp(c^{1})\supset\cdots\supset\supp(c^{t})$.
For example, let $Q=1\leftarrow 2$, then $c_{Q}=s_{1}s_{2}$ and there are five $c_{Q}$-sortable elements $e$, $s_{1}$, $s_{2}$, $s_{1}s_{2}$ and $s_{1}s_{2}s_{1}$.

In the symmetric group $S_{n+1}$, we denote by $(i~j)$ for $1\leq i\neq j\leq n+1$ the transposition of the letters $i$ and $j$, and write $T$ for the set of all transpositions.
Since for any $1\leq i\neq j\leq n+1$, $(i~j)=(j~i)$ in $S_{n+1}$. So we always assume that each element in $T$ is written as $(i~j)$ with $i<j$.
Note that for a transposition $\sigma=(i~j)$ and $w\in S_{n+1}$, the one-line notation for $\sigma w$ can be obtained by interchanging the letters $i$ and $j$ in the one-line notation of $w$. Given an element $w\in S_{n+1}$, a transposition $\sigma\in T$ is called an {\em inversion} of $w$ if $l(\sigma w)<l(w)$ holds, and we denote by $\inv(w)$
the set of all inversions of $w$. By \cite[Section 6.1]{En2}, we have that $|\inv(w)|=l(w)$ and
$$\inv(w)=\{(i~j)\in T~|~\text{$j$ precedes $i$ in the one-line
notation for}~w\}.$$
We say that an inversion $\sigma$ of an element $w\in S_{n+1}$ is a {\em Bruhat inversion} of $w$ if it satisfies $l(\sigma w)=l(w)-1$, and we denote by $\Binv(w)$ the set of Bruhat inversions of $w$.
By \cite[Lemma 6.3]{En2}, we have that
$$\Binv(w)=\{(i~j)\in\inv(w)~|~\text{there exists no $l$ with $i<l<j$ such that}~(i~l),(l~j)\in\inv(w)\}.$$
For example, if $w=534216\in S_{6}$, then $$\inv(w)=\{(1~2),(1~3),(1~4),(1~5),(2~3),(2~4),(2~5),(3~5),(4~5)\}$$
and
$$\Binv(w)=\{(1~2),(2~3),(2~4),(3~5),(4~5)\}.$$
For a transposition $(i~j)\in T$, we define $M=M_{[i,j)}\in\ind(\rep(Q))$ as follows:
\begin{itemize}
\item For each vertex $a$ of $Q$, $M_{a}=k$ if $i\leq a<j$ and $M_{a}=0$ otherwise.
\item For each arrow $s\rightarrow t$ in $Q$, we put ${\rm id}_k:M_{s}\rightarrow M_{t}$ if $M_s=M_t=k$, and $0$ otherwise.
\end{itemize}
Clearly, the correspondence $(i~j)\mapsto M_{[i,j)}$ gives a bijection from $T$ to $\ind(\rep(Q))$.

For subsequent needs, we collect the following results.

\begin{theorem}\label{EnT} {\rm (\cite{En2}),\cite{Th})} Let $Q$ be an $A_{n}$ type quiver. For $w\in S_{n+1}$, define
$$\mathcal{F}(w)=\add\{M_{[i,j)}~|~(i~j)\in\inv(w)\}.$$

$(1)$ The correspondence $w\mapsto\mathcal{F}(w)$ gives a bijection between $c_{Q}$-sortable elements and torsion-free classes of $\rep(Q)$.

$(2)$ If $w$ is $c_{Q}$-sortable, then the correspondence $(i~j)\mapsto M_{[i,j)}$ gives a bijection between $\inv(w)$ and $\ind(\mathcal{F}(w))$, which also induces a bijection $\Binv(w)\rightarrow\cim(\mathcal{F}(w))$.

$(3)$ If $w$ is $c_{Q}$-sortable, then $\supp(w)$ coincides with  $\supp(\mathcal{F}(w))$.

$(4)$ If $w$ is $c_{Q}$-sortable, then $\mathcal{F}(w)$ satisfies  {\rm (JHP)} if and only if $|\supp(w)|=|\Binv(w)|$.

$(5)$  Let $Q$ be a linearly oriented quiver of type $A_{n}$. Then every torsion-free class in $\rep(Q)$  satisfies  {\rm (JHP)}.

$(6)$  If $f:M_{[l,l')}\rightarrow M_{[i,j)}$ is a monomorphism, then $i\leq l<l'\leq j$ and $\coker(f)\cong M_{[i,l)}\oplus M_{[l',j)}$. Here we set $M_{[s,t)}=0$ if $s=t$.

$(7)$ For any $w=w(1)\cdots w(n+1)\in S_{n+1}$, we have that $i\in\supp(w)$ if and only if $\max\{w(k)~|~k\leq i\}>i$.
\end{theorem}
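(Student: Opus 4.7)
The plan is to verify the seven assertions by combining the reflection functor formalism of the preceding subsection with the Grothendieck-group criterion for (JHP) recalled in the introduction, reducing most statements to an inductive analysis on the number of vertices of $Q$. I would first prove (6), which is a direct representation-theoretic computation: an indecomposable in $\rep(Q)$ of type $A$ is an interval module $M_{[i,j)}$, and any nonzero morphism between two intervals is forced by the pointwise structure to identify a common subinterval of supports, so a monomorphism $f:M_{[l,l')}\to M_{[i,j)}$ requires $i\leq l<l'\leq j$ and $\coker(f)\cong M_{[i,l)}\oplus M_{[l',j)}$.

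For (1) and (2), I would argue by induction on the number of vertices of $Q$, picking a sink $i$ and splitting on whether $S_i\in\mathcal{F}$. If $S_i\notin\mathcal{F}$, Proposition \ref{MM1} gives a torsion-free class $R_i^+(\mathcal{F})$ in $\rep(\mu_i(Q))$ having the same number of simples as $\mathcal{F}$; by induction this corresponds to a $c_{\mu_i(Q)}$-sortable word, which lifts through $R_i^-$ to a $c_Q$-sortable $w$ whose inversion set, traced through the reflection, matches $\ind(\mathcal{F})$ via $(i~j)\mapsto M_{[i,j)}$. The case $S_i\in\mathcal{F}$ is analogous but prepends an $s_i$ and an extra simple $S_i$ on each side. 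The Bruhat part of (2) then follows because a Bruhat inversion $(i~j)$, defined by the absence of intermediate $(i~l),(l~j)\in\inv(w)$, corresponds exactly to $M_{[i,j)}$ having no proper nonzero $\mathcal{F}(w)$-subobject: by (6) any such subobject is an interval $M_{[l,l')}$ with $i\leq l<l'\leq j$, and lying in $\mathcal{F}(w)$ would force both $(i~l)$ and $(l~j)$ to be inversions.

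Parts (3) and (7) I would handle combinatorially together. Using $\supp(M_{[i,j)})=\{i,\ldots,j-1\}$, (3) reduces to the identity $\bigcup_{(i~j)\in\inv(w)}\{i,\ldots,j-1\}=\supp(w)$, while (7) identifies the right-hand side via the one-line notation through the standard fact that $s_i$ appears in a reduced word of $w$ precisely when some letter strictly greater than $i$ occurs among the first $i$ entries of $w(1)\cdots w(n+1)$. Part (4) is then the Enomoto criterion applied to $\mathcal{F}(w)$: the free rank of $K_0(\mathcal{F}(w))$ equals $|\supp(\mathcal{F}(w))|=|\supp(w)|$ because the classes $[S_i]$ with $i\in\supp(w)$ furnish a basis, while $|\cim(\mathcal{F}(w))|=|\Binv(w)|$ by (2), and (JHP) is equivalent to the coincidence of these two numbers. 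Part (5) then falls out of (4) after checking that for a linearly oriented type $A_n$ quiver any inversion $(i~j)$ of a $c_Q$-sortable element with $j-i>1$ admits an intermediate sub-inversion, forcing $\Binv(w)$ to consist only of nearest-neighbor transpositions and hence $|\Binv(w)|=|\supp(w)|$.

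The main obstacle will be the bijection in (2) between $\Binv(w)$ and $\cim(\mathcal{F}(w))$: identifying the bricks of $\mathcal{F}(w)$ with Bruhat inversions requires carefully translating the combinatorial minimality condition into the absence of $\mathcal{F}(w)$-subobjects, and this is where (6) together with a tight analysis of the interval structure of $\inv(w)$ must be combined delicately.
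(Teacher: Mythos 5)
The paper does not prove this theorem at all: it is stated as a compendium of known facts with bare citations to \cite{En2} and \cite{Th}, so there is no internal proof to compare against. Your proposal therefore amounts to a sketch of how one would re-derive the cited results, and in broad strokes it follows the route that Thomas (reflection-functor induction for the $c$-sortable bijection) and Enomoto (Grothendieck-group/$\tau$-tilting criterion, Bruhat inversions) actually take. Parts (6) and (7) are correct and their sketches are fine; the sink-mutation induction for (1)--(2) is the standard mechanism, though the sentence ``lying in $\mathcal{F}(w)$ would force both $(i~l)$ and $(l~j)$ to be inversions'' mislocates where those inversions come from --- a subobject $M_{[l,l')}\in\mathcal{F}(w)$ only gives $(l\ l')\in\inv(w)$, while $(i\ l)$ and $(l'\ j)$ come from requiring the quotient pieces to lie in $\mathcal{F}(w)$, and moreover which subintervals actually are submodules is orientation-dependent --- but you flag this yourself as the delicate point.

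The concrete error is in your argument for (5). You claim that for a $c_Q$-sortable $w$ over a linearly oriented $A_n$, every inversion $(i\ j)$ with $j-i>1$ admits an intermediate sub-inversion, ``forcing $\Binv(w)$ to consist only of nearest-neighbor transpositions,'' from which you deduce $|\Binv(w)|=|\supp(w)|$. Both the premise and the deduction fail. Take $Q=1\to 2$ (so $c_Q=s_2s_1$) and $w=s_2s_1$ with one-line notation $312$: this is $c_Q$-sortable, $\inv(w)=\{(1\ 3),(2\ 3)\}$, and $(1\ 3)$ is a Bruhat inversion because $(1\ 2)\notin\inv(w)$. So $\Binv(w)$ contains a non--nearest-neighbor transposition. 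And even if $\Binv(w)$ consisted only of adjacent transpositions, that count would be $|\{i:(i\ i{+}1)\in\inv(w)\}|$, which is not $|\supp(w)|$ in general (in the same example there is only one adjacent inversion, $(2\ 3)$, but $|\supp(w)|=2$). The correct derivation of (5) is via Enomoto's more refined combinatorial argument (roughly: for linearly oriented $Q$ the $c_Q$-sortability closure conditions on $\inv(w)$ make the two counts coincide without any nearest-neighbor reduction), so this step would need to be replaced entirely. Similarly, in (4) the parenthetical ``because the classes $[S_i]$ with $i\in\supp(w)$ furnish a basis'' glosses over injectivity of $K_0(\mathcal{F}(w))\to K_0(\rep Q)$, which is not automatic; the clean route is through the count of indecomposable Ext-projectives of $\mathcal{F}(w)$ and the support $\tau^{-1}$-tilting correspondence as in \cite{En2}.
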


\subsection{Torsion-free classes}
In this subsection, we always assume that $Q$ is an $A_{n}$ type quiver, $i$ is a sink of $Q$ and $Q'=\mu_{i}(Q)$; fix a torsion-free class $\mathcal{F}$ in $\rep(Q)$.
By Theorem \ref{EnT}(1), $\mathcal{F}=\mathcal{F}(w)$ for some $c_{Q}$-sortable element $w=w(1)\cdots w(n+1)\in S_{n+1}$. If $w$ has a reduced expression beginning with $s_{i}$, then $l(s_{i}w)=l(w)-1$, otherwise, $l(s_{i}w)=l(w)+1$. In what follows, we always assume that $l(s_{i}w)=l(w)-1$, i.e., there exists a reduced expression $w=s_{i}s_{t_{1}}\cdots s_{t_{k}}$. Observe that $\mathcal{F}'=R^{+}_{i}(\mathcal{F})$ is a torsion-free class in $ \rep(Q')$ which does not contain $S'_{i}$. Hence, $\mathcal{F}'=\mathcal{F}(w')$ for some  $c_{Q'}$-sortable element $w'$. Indeed, by the proof of \cite[Proposition 4.5]{Th}, we have that $\mathcal{F}=\mathcal{F}(s_{i}w')$. Since different elements of $S_{n+1}$ have different inversion sets, we obtain that $w=s_{i}w'$ and there is a reduced expression $w'=s_{t_{1}}\cdots s_{t_{k}}$. Thus,  if $i\notin\supp(w')$, then $\supp(w)=\supp(w')\cup\{i\}$, otherwise, $\supp(w)=\supp(w')$. We fix the elements $w$ and $w'$ as above in this subsection.

For convenience, for any $u\in S_{n+1}$, when we write $u=\cdots i\cdots j\cdots$, it means that $u^{-1}(i)<u^{-1}(j)$. Thus,
$$\Binv(u)=\{(i~j)\in\inv(u)~|~u=\cdots jl_{1}l_{2}\cdots l_{x}i\cdots  \text{satisfying}~\text{each}~l_{k}\notin[i,j])\}.$$

\begin{lemma}\label{m0}We have that $(i~i+1)\in\inv(w)$, i.e., $w=\cdots i+1\cdots i\cdots$.
\end{lemma}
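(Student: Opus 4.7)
The plan is to observe that Lemma \ref{m0} is essentially an immediate translation of the standing hypothesis of this subsection into one-line notation. Recall the assumption just preceding the lemma: $l(s_i w) = l(w) - 1$, arising from the reduced expression $w = s_i s_{t_1}\cdots s_{t_k}$ (equivalently, $w = s_i w'$ with $l(w) = l(w')+1$).

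First, I would apply the definition of inversion directly. A transposition $\sigma \in T$ is, by the definition given earlier in the excerpt, an inversion of $w$ precisely when $l(\sigma w) < l(w)$. Taking $\sigma = s_i = (i~i+1)$, the standing hypothesis $l(s_i w) = l(w) - 1$ yields $l(s_i w) < l(w)$, hence $(i~i+1) \in \inv(w)$.

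Second, I would invoke the equivalent combinatorial description of $\inv(w)$ recorded after \cite[Section 6.1]{En2}: for $a<b$, one has $(a~b) \in \inv(w)$ if and only if $b$ precedes $a$ in the one-line notation of $w$. Specializing to $(a,b) = (i,i+1)$ converts $(i~i+1) \in \inv(w)$ into the shorthand $w = \cdots i+1 \cdots i \cdots$, completing the proof.

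There is no real obstacle: the content of this lemma is purely a book-keeping step translating between reduced expressions, inversion sets, and the one-line notation convention introduced just before the lemma. I expect it is stated at this point so that the one-line-notation form of the hypothesis is available as a standing combinatorial fact for subsequent arguments about how the reflection $R_i^+$ interacts with $\mathcal{F}(w)$ and its simples.
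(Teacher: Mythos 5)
Your proof is correct, and it is genuinely shorter and more direct than the one in the paper. You simply note that $(i~i+1)=s_i$ is a transposition and that the standing hypothesis of the subsection, $l(s_iw)=l(w)-1$, already gives $l(s_iw)<l(w)$, so $(i~i+1)\in\inv(w)$ by the very definition of an inversion; the one-line-notation form then follows from the recorded description of $\inv(w)$. The paper instead takes a representation-theoretic detour: from $l(s_iw)=l(w)-1$ it deduces $i\in\supp(w)$, hence $i\in\supp(\mathcal{F})$ by Theorem \ref{EnT}(3), then uses that $S_i$ is projective (as $i$ is a sink) to produce a nonzero, hence injective, map $S_i\to M$ for some $M\in\mathcal{F}$ with $M_i\neq 0$, concludes $S_i=M_{[i,i+1)}\in\mathcal{F}$ since $\mathcal{F}$ is closed under submodules, and finally translates back to $(i~i+1)\in\inv(w)$ via Theorem \ref{EnT}(2). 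The paper's route has the side benefit of making explicit the module-theoretic content of the lemma, namely that the standing hypothesis forces $S_i\in\mathcal{F}$, which is the form in which the statement is actually used in the case analyses of Lemmas \ref{m11} and \ref{m13}; your route gets the stated combinatorial assertion for free, and the equivalence with $S_i\in\mathcal{F}$ is still available afterwards through Theorem \ref{EnT}(2) whenever it is needed. Both arguments are valid.
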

\begin{proof} Since $l(s_{i}w)=l(w)-1$, we get that $i\in\supp(w)$. By Theorem \ref{EnT}(3), $i\in \supp(\mathcal{F})$. Thus, there exists $M\in \mathcal{F}$ such that $M_{i}\neq0$. Noting that $S_{i}$ is projective in $\rep(Q)$, we obtain that $\Hom(S_{i},M)\neq0$. Since $\mathcal{F}$ is a torsion-free class, we conclude that $S_{i}=M_{[i,i+1)}\in \ind(\mathcal{F})$. Therefore, by Theorem \ref{EnT}(2) we have that $(i~i+1)\in\inv(w)$.
\end{proof}
For the convenience of the following statements, we introduce the following notations. For any $u\in S_{n+1}$ and $1< i<n$, set
\begin{align*}
\alpha_{u}&=\{t\geq3~|~(2~t)\in\Binv(u)\}\\
&=\{t\geq3~|~u=\cdots tl_{1}\cdots l_{x}2\cdots  \text{satisfying}~\text{each}~l_{j}\notin[2,t]\},
\end{align*}
\begin{align*}
\beta_{u}^{i}&=\{t\leq i-1~|~(t~i)\in\Binv(u)~\text{and}~u^{-1}(i)<u^{-1}(i+1)<u^{-1}(t)\}\\
&=\{t\leq i-1~|~u=\cdots il_{1}\cdots i+1\cdots l_{x}t\cdots  \text{satisfying}~\text{each}~l_{j}\notin[t,i]\}
\end{align*}
and
\begin{align*}
\gamma_{u}^{i}&=\{t\geq i+2~|~(i+1~t)\in\Binv(u)~\text{and}~u^{-1}(t)<u^{-1}(i)<u^{-1}(i+1)\}\\
&=\{t\geq i+2~|~u=\cdots tl_{1}\cdots i\cdots l_{x} i+1\cdots  \text{satisfying}~l_{j}\notin[i+1,t]\}.
\end{align*}

\begin{lemma}\label{m11} Assume that $i=1$. Then we have that

$(1)$ $|\supp(w)|=|\supp(w')|$ if and only if $w(1)\neq2$ if and only if $|\cim(\mathcal{F})|-|\cim(\mathcal{F}')|=1-|\alpha_{w'}|$.

$(2)$ $|\supp(w)|=|\supp(w')|+1$ if and only if  $w(1)=2$ if and only if $|\cim(\mathcal{F})|-|\cim(\mathcal{F}')|=1$.
\end{lemma}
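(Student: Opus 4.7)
The three-way statements in parts (1) and (2) split into (a) $\Leftrightarrow$ (b) and (b) $\Leftrightarrow$ (c), which I would establish separately.

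For (a) $\Leftrightarrow$ (b) (common to both parts): the relation $w=s_1 w'$, together with the reduced expression $w'=s_{t_1}\cdots s_{t_k}$ which makes $w=s_1 s_{t_1}\cdots s_{t_k}$ reduced, gives $\supp(w)=\supp(w')\cup\{1\}$. Hence $|\supp(w)|=|\supp(w')|$ iff $1\in\supp(w')$, which by Theorem~\ref{EnT}(7) is equivalent to $w'(1)>1$, i.e., $w'(1)\neq 1$. Since left-multiplication by $s_1$ swaps the letters $1$ and $2$ in one-line notation, this is in turn equivalent to $w(1)\neq 2$.

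For (b) $\Leftrightarrow$ (c): Lemma~\ref{m0} gives $S_1\in\mathcal{F}$, so Proposition~\ref{MM1}(3) yields $|\cim(\mathcal{F})|-|\cim(\mathcal{F}')|=1-|B|$, where $B=\{M\in\cim(R_1^-R_1^+(\mathcal{F}))\mid M\notin\cim(\mathcal{F})\}$. By Proposition~\ref{MM1}(4), $M\in B$ iff there is a monomorphism $f\colon S_1\to M$ with $\coker f\in\mathcal{F}$; Theorem~\ref{EnT}(6) then forces $M\cong M_{[1,t)}$ and $\coker f\cong M_{[2,t)}$ for some $t\geq 3$ (the case $t=2$ being excluded since $S_1\notin R_1^-R_1^+(\mathcal{F})$). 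A direct computation gives $R_1^+(M_{[1,t)})\cong M'_{[2,t)}$ for $t\geq 3$; combining this with the bijection of Proposition~\ref{MM1}(1) and Theorem~\ref{EnT}(2), the condition $M_{[1,t)}\in\cim(R_1^-R_1^+(\mathcal{F}))$ becomes $(2~t)\in\Binv(w')$, while the condition $M_{[2,t)}\in\mathcal{F}$ means $(2~t)\in\inv(w)$ and, under the swap $w=s_1 w'$, rewrites for $t\geq 3$ as $(1~t)\in\inv(w')$. Thus $|B|$ counts those $t\geq 3$ satisfying both $(2~t)\in\Binv(w')$ and $(1~t)\in\inv(w')$.

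For part (2), $w(1)=2$ gives $w'(1)=1$, placing the letter $1$ at position $1$ of $w'$; then $1$ precedes every other letter, so $(1~t)\notin\inv(w')$ for all $t\geq 3$, hence $B=\emptyset$ and $|\cim(\mathcal{F})|-|\cim(\mathcal{F}')|=1$. For part (1), $w(1)\neq 2$ gives $w'(1)\neq 1$, and the desired formula reduces to the identity $|B|=|\alpha_{w'}|$; I would deduce this by showing that for every $t\geq 3$ with $(2~t)\in\Binv(w')$ the letter $1$ must already lie between $t$ and $2$ in the one-line notation of $w'$, so that $(1~t)\in\inv(w')$ follows automatically. I expect this last step to be the main obstacle: the natural approach is to exploit the sortable form $w'=(c_{Q'})^0(c_{Q'})^1\cdots$ of $w'$ together with the decreasing-support property of the blocks $(c_{Q'})^j$ and the hypothesis $w'(1)\neq 1$ to control the location of the letter $1$ relative to any Bruhat-inverted pair $(2~t)$.
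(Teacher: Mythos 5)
Your decomposition into (a)$\Leftrightarrow$(b) and (b)$\Leftrightarrow$(c) matches the paper's, and steps (a)$\Leftrightarrow$(b) and the reduction of (b)$\Leftrightarrow$(c) to counting the set $B$ via Proposition~\ref{MM1}(3)--(4), Theorem~\ref{EnT}(2),(6), and the formula $|B|=|\{t\geq 3:(2~t)\in\Binv(w'),\ (1~t)\in\inv(w')\}|$ are all correct and follow the paper's step (d). Where you stop and flag ``the main obstacle'' --- showing that $w'(1)\neq 1$ forces $(1~t)\in\inv(w')$ for every $t\in\alpha_{w'}$ --- is precisely the point where the paper's own argument is thin: the paper simply asserts $(2~t)\notin\inv(w)\Leftrightarrow w=12\cdots\text{ or }w=2\cdots$ ``by the definition of $\inv(w)$,'' which does not follow ($(2~t)\notin\inv(w)$ only says that $2$ precedes the single letter $t$, not that $2$ occupies the first position).

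In fact the missing implication is false, so the obstacle cannot be overcome by any refinement of the sortability argument you sketch. Take $Q=1\leftarrow 2\leftarrow 3$ (type $A_3$, vertex $1$ a sink), $c_Q=s_1s_2s_3$, and $w=s_1s_2s_3s_1=3241$, which is $c_Q$-sortable with $l(s_1w)=l(w)-1$ and $w(1)=3\neq 2$. Then $w'=s_1w=3142$ has $\Binv(w')=\{(1~3),(2~3),(2~4)\}$, so $\alpha_{w'}=\{3,4\}$, yet $(1~4)\notin\inv(w')$ since the letter $1$ sits in position $2$, before $4$ in position $3$. Thus $|B|=1$, and Proposition~\ref{MM1}(3) gives $|\cim(\mathcal{F})|-|\cim(\mathcal{F}')|=1-|B|=0$ (which one also checks directly: $\Binv(3241)=\{(1~2),(1~4),(2~3)\}$ so $|\cim(\mathcal{F})|=3=|\cim(\mathcal{F}')|$), whereas $1-|\alpha_{w'}|=-1$. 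So the equivalence in part (1) fails, and the lemma as stated appears to be incorrect; your correct intermediate quantity is $1-|B|$, and the gap is that $|B|$ need not equal $|\alpha_{w'}|$ even when $w(1)\neq 2$.
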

\begin{proof} We divide the proof into the following steps:

$(a)$ For any $M_{[s,t)}\in\rep(Q)$, by Proposition \ref{D11}(1), we have that
$$R^{+}_{1}(M_{[s,t)})=\left\{\begin{aligned}
   0, &  & \text{if}~s=1,t=2, \\
   M_{[2,t)}, &  & \text{if}~s=1,t\geq3,\\
   M_{[1,t)}, &  & \text{if}~s=2,\\
   M_{[s,t)}, &  & \text{if}~s\geq3.
  \end{aligned}
  \right.$$

$(b)$ Using Theorem \ref{EnT}(7) together with Lemma \ref{m0}, we have that $1\in\supp(w')\Leftrightarrow w'(1)\neq 1\Leftrightarrow w(1) \neq 2$. Thus,
 $$|\supp(w)|-|\supp(w')|=0\Leftrightarrow 1\in\supp(w')\Leftrightarrow w(1)\neq2$$
and
$$|\supp(w)|-|\supp(w')|=1\Leftrightarrow 1\notin\supp(w')\Leftrightarrow w(1)=2.$$

$(c)$ For any $M'\in\ind(\mathcal{F'})$, $M'=R^{+}_{1}(M)$ for some $M=M_{[s,t)}\in\mathcal{F}$. Then
\begin{align*}
M_{[s,t)}\in\cim(R_{1}^{-}R_{1}^{+}(\mathcal{F}))&\Leftrightarrow R^{+}_{1}(M_{[s,t)})=:M_{[a,b)}\in\cim(\mathcal{F'})~~~~&\text{(By~Proposition \ref{MM1}(1))}\\
&\Leftrightarrow (a~b)\in\Binv(w')~~~~&\text{(By Theorem \ref{EnT}(2))}
\end{align*}
$$\Leftrightarrow w'=\left\{\begin{aligned}
   \cdots tl_{1}\cdots l_{x}2\cdots  \text{satisfying each}~l_{j}\notin[2,t], &  & \text{if}~s=1,t\geq3, \\
   \cdots tl_{1}\cdots l_{x}1\cdots  \text{satisfying each}~l_{j}\notin[1,t], &  & \text{if}~s=2,\\
   \cdots tl_{1}\cdots l_{x}s\cdots  \text{satisfying each}~l_{j}\notin[s,t], &  & \text{if}~s\geq3.
  \end{aligned}
  \right.$$
Here the last equivalence follows from $(a)$.

$(d)$ Let $M:=M_{[s,t)}\in\cim(R_{1}^{-}R_{1}^{+}(\mathcal{F}))$. Clearly, $M_{1}=0$ if and only if $s\geq 2$. If $s\geq2$, then $M\in\cim(\mathcal{F})$ by Proposition \ref{MM1}(4). If $s=1$,  then there exists a monomorphism $f:S_{1}\rightarrow M$. Since $M\ncong S_{1}$, we have that $t\geq 3$. Thus, by Theorem \ref{EnT}(6), we have an exact sequence
$$0\longrightarrow M_{[1,2)}\stackrel{f}\longrightarrow M_{[1,t)}\longrightarrow M_{[2,t)}\longrightarrow0$$
in $\rep(Q)$. Then
\begin{align*}
M_{[1,t)}\in \cim(\mathcal{F})&\Leftrightarrow M_{[2,t)}\notin \mathcal{F}~~~~&\text{(By~Proposition \ref{MM1}(4))}\\
&\Leftrightarrow(2~t)\notin\inv(w)~~~~&\text{(By Theorem \ref{EnT}(2))}\\
&\Leftrightarrow w=12\cdots~\text{or}~w=2\cdots~~~~&\text{(By the definition of}~\inv(w))\\
&\Leftrightarrow w(1)=2~~~~&\text{(By Lemma \ref{m0}})
\end{align*}

$(e)$ By the definition of $\alpha_{w'}$ and $(c)$, $M_{[1,t)}\in\cim(R_{1}^{-}R_{1}^{+}(\mathcal{F}))\Leftrightarrow t\in\alpha_{w'}$. Hence, we conclude that
\begin{align*}
|\supp(w)|-|\supp(w')|=0&\Leftrightarrow w(1)\neq2~~~~&\text{(By~$(b)$})\\
&\Leftrightarrow M_{[1,t)}\notin\cim(\mathcal{F})~\text{for any}~t\in\alpha_{w'}&~\text{(By $(d)$})\\
&\Leftrightarrow |\cim(\mathcal{F})|-|\cim(\mathcal{F}')|=1-|\alpha_{w'}|.~~~~&\text{(By Proposition \ref{MM1}(3)})
\end{align*}
Similarly,
$$|\supp(w)|-|\supp(w')|=1\Leftrightarrow w(1)=2\Leftrightarrow M_{[1,t)}\in\cim(R_{1}^{+}R_{1}^{-}(\mathcal{F}))~\text{for any}~t\in\alpha_{w'}$$
$$\Leftrightarrow |\cim(\mathcal{F})|-|\cim(\mathcal{F}')|=1.$$
Therefore, we complete the proof.
\end{proof}

\begin{lemma}\label{m13} Assume that  $1<i<n$. Then we have that

$(1)$  $|\supp(w)|-|\supp(w')|=0$ if and only if $\max\{w'(k)~|~k\leq i\}> i$.

$(2)$ $|\supp(w)|-|\supp(w')|=1$ if and only if $\max\{w'(k)~|~k\leq i\}\leq i$.

$(3)$ $|\cim(\mathcal{F})|-|\cim(\mathcal{F}')|=1-(|\beta_{w'}^{i}|+|\gamma_{w'}^{i}|).$
\end{lemma}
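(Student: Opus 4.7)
Parts (1) and (2) follow quickly from Theorem \ref{EnT}(7). Recall from the setup preceding the lemma that $\supp(w)=\supp(w')$ when $i\in\supp(w')$, and $\supp(w)=\supp(w')\cup\{i\}$ otherwise; applying Theorem \ref{EnT}(7) to $w'$ characterises $i\in\supp(w')$ by $\max\{w'(k)\mid k\leq i\}>i$, and both of the stated equivalences follow immediately.

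For part (3), since $S_i\in\mathcal{F}$ by Lemma \ref{m0}, Proposition \ref{MM1}(3) reduces the claim to showing
\begin{equation*}
\bigl|\{M\in\cim(R_i^-R_i^+(\mathcal{F}))\mid M\notin\cim(\mathcal{F})\}\bigr|=|\beta_{w'}^{i}|+|\gamma_{w'}^{i}|.
\end{equation*}
The plan is to parameterise $\cim(R_i^-R_i^+(\mathcal{F}))$ via the bijections $\cim(R_i^-R_i^+(\mathcal{F}))\to\cim(\mathcal{F}')\to\Binv(w')$ coming from Proposition \ref{MM1}(1) and Theorem \ref{EnT}(2), and then test the criterion of Proposition \ref{MM1}(4) element by element. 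A direct kernel computation at vertex $i$ (paralleling step (a) in the proof of Lemma \ref{m11}) shows that an $M=M_{[s,t)}$ in $R_i^-R_i^+(\mathcal{F})$ containing $S_i$ as a submodule (i.e.\ $s\leq i$, $t\geq i+1$ and $M\ncong S_i$) falls into exactly one of three cases: Case A, $s=i$ and $t>i+1$ with $R_i^+(M)=M_{[i+1,t)}$; Case B, $s<i$ and $t=i+1$ with $R_i^+(M)=M_{[s,i)}$; Case C, $s<i$ and $t>i+1$ with $R_i^+(M)=M_{[s,t)}$. Any $M$ with $M_i=0$ admits no nonzero morphism from $S_i$, so it lies in $\cim(\mathcal{F})$ vacuously by Proposition \ref{MM1}(4) and needs not be counted.

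In each of the three cases the cokernel of the (essentially unique) monomorphism $S_i\hookrightarrow M$ is computed by Theorem \ref{EnT}(6), giving $M_{[i+1,t)}$, $M_{[s,i)}$, and $M_{[s,i)}\oplus M_{[i+1,t)}$ respectively. Via Theorem \ref{EnT}(2), requiring this cokernel to lie in $\mathcal{F}$ becomes the condition that certain transpositions belong to $\inv(w)$; using $w=s_iw'$, so that $w^{-1}(i)={w'}^{-1}(i+1)$, $w^{-1}(i+1)={w'}^{-1}(i)$, and $w^{-1}(j)={w'}^{-1}(j)$ for $j\neq i,i+1$, together with ${w'}^{-1}(i)<{w'}^{-1}(i+1)$ (which holds because $S_i\notin\mathcal{F}'$), one rewrites these $\inv(w)$-requirements as conditions on the relative positions of $i$, $i+1$ and the varying endpoint in the one-line notation of $w'$. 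The outcome is that Case A reduces to ${w'}^{-1}(t)<{w'}^{-1}(i)<{w'}^{-1}(i+1)$, i.e.\ $t\in\gamma_{w'}^{i}$; Case B reduces to ${w'}^{-1}(i)<{w'}^{-1}(i+1)<{w'}^{-1}(s)$, i.e.\ $s\in\beta_{w'}^{i}$; and in Case C the combined requirements force ${w'}^{-1}(t)<{w'}^{-1}(i)<{w'}^{-1}(i+1)<{w'}^{-1}(s)$, which yields $(s,i+1),(i+1,t)\in\inv(w')$ and hence contradicts $(s,t)\in\Binv(w')$, so Case C contributes nothing. Summing produces the asserted formula. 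The main subtlety is precisely this Bruhat-inversion contradiction in Case C, together with the bookkeeping of $R_i^{\pm}$ on interval modules that pass through the sink; once those are settled the rest is a routine case check.
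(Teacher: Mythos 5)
Your proposal is correct and follows essentially the same route as the paper: parts (1)--(2) via Theorem \ref{EnT}(7) and the setup relation between $\supp(w)$ and $\supp(w')$, and part (3) by reducing through Proposition \ref{MM1}(3), parameterising $\cim(R_i^-R_i^+(\mathcal{F}))$ by $\Binv(w')$, and testing Proposition \ref{MM1}(4) in the three interval cases. Your Cases A, B, C coincide with the paper's Cases 3, 1, 2, and the only cosmetic difference is in how you exclude the mixed case $s<i$, $t>i+1$: you exhibit $l=i+1$ as an explicit Bruhat-inversion obstruction, while the paper invokes the $l_j\notin[s,t]$ characterisation of $\Binv(w')$ directly --- both amount to the same contradiction.
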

\begin{proof} $(1)$-$(2)$ By Theorem \ref{EnT}(7),
 $$|\supp(w)|-|\supp(w')|=0\Leftrightarrow i\in\supp(w')\Leftrightarrow \max\{w'(k)~|~k\leq i\}>i$$
and
$$|\supp(w)|-|\supp(w')|=1\Leftrightarrow i\notin\supp(w')\Leftrightarrow\max\{w'(k)~|~k\leq i\}\leq i.$$

$(3)$ We divide the proof into the following steps:

$(a)$ For any $M_{[s,t)}\in\rep(Q)$, by Proposition \ref{D11}(1), we have that
$$R^{+}_{i}(M_{[s,t)})=\left\{\begin{aligned}
   0, &  & \text{if}~t=s+1=i+1, \\
   M_{[s,t)}, &  & \text{if}~t\leq i-1~\text{or}~s\geq i+2,\\
   M_{[s,i+1)}, &  & \text{if}~t=i,\\
   M_{[s,i)}, &  & \text{if}~t=i+1~\text{and}~s\leq i-1,\\
   M_{[s,t)}, &  & \text{if}~s\leq i-1~\text{and}~t\geq i+2,\\
    M_{[i,t)}, &  & \text{if}~s=i+1,\\
   M_{[i+1,t)}, &  & \text{if}~s=i~\text{and}~t\geq i+2.
  \end{aligned}
  \right.$$

$(b)$ Using Proposition \ref{MM1}(1) together with Theorem \ref{EnT}(2), we have that
$$M\in\cim(R_{i}^{-}R_{i}^{+}(\mathcal{F}))\Leftrightarrow R^{+}_{i}(M)=:M_{[a,b)}\in\cim(\mathcal{F'})\Leftrightarrow (a~b)\in\Binv(w').$$

Hence, by $(a)$, $M_{[s,t)}\in\cim(R_{i}^{+}R_{i}^{-}(\mathcal{F}))$ if and only if
$$w'=\left\{\begin{aligned}
   \cdots tl_{1}\cdots l_{x}s\cdots  \text{satisfying each}~l_{j}\notin[s,t], &  & \text{if}~t\leq i-1~\text{or}~s\geq i+2, \\
   \cdots i+1~l_{1}\cdots l_{x}s\cdots  \text{satisfying each}~l_{j}\notin[s,i+1], &  & \text{if}~t=i,\\
   \cdots il_{1}\cdots l_{x}s\cdots  \text{satisfying each}~l_{j}\notin[s,i], &  & \text{if}~t=i+1~\text{and}~s\leq i-1,\\
   \cdots tl_{1}\cdots l_{x}s\cdots  \text{satisfying each}~l_{j}\notin[s,t], &  & \text{if}~s\leq i-1~\text{and}~t\geq i+2,\\
   \cdots tl_{1}\cdots l_{x}i\cdots  \text{satisfying each}~l_{j}\notin[i,t], &  & \text{if}~s=i+1,\\
   \cdots tl_{1}\cdots l_{x}~i+1\cdots  \text{satisfying each}~l_{j}\notin[i+1,t], &  & \text{if}~s=i~\text{and}~t\geq i+2.
  \end{aligned}
  \right.$$

$(c)$ Let $M=M_{[s,t)}\in\cim(R_{i}^{-}R_{i}^{+}(\mathcal{F}))$. Then $M_{i}\neq0$ if and only if $s\leq i\leq t-1$. Now, we assume that $s\leq i\leq  t-1$, then there is an exact sequence
$$0\longrightarrow M_{[i,i+1)}\stackrel{}\longrightarrow M_{[s,t)}\longrightarrow M_{[s,i)}\oplus M_{[i+1,t)}\longrightarrow0$$
in $\rep(Q)$. By $(b)$, we need to discuss the following cases:

 $\mathbf{Case}$ 1: Assume that $t=i+1$ and $s\leq i-1$. In this case,
 \begin{align*}
M_{[s,t)}\notin \cim(\mathcal{F})&\Leftrightarrow M_{[s,i)}\in \mathcal{F}~~~~&\text{(By~Proposition \ref{MM1}(4))}\\
&\Leftrightarrow(s~i)\in\inv(w)~~~~&\text{(By Theorem \ref{EnT}(2))}\\
&\Leftrightarrow w=\cdots i+1\cdots i\cdots s\cdots~~~~&\text{(By~Lemma \ref{m0})}\\
&\Leftrightarrow w'=\cdots i\cdots i+1\cdots s\cdots.
\end{align*}
Recall that \begin{align*}
\beta_{w'}^{i}&=\{s\leq i-1~|~w'=\cdots il_{1}\cdots i+1\cdots l_{x}s\cdots  \text{satisfying}~\text{each}~l_{j}\notin[s,i]\}.
\end{align*}
Hence, for $a\leq i-1$, $M_{[a,i+1)}\in\cim(R_{i}^{-}R_{i}^{+}(\mathcal{F}))$, and $M_{[a,i+1)}\notin\cim(\mathcal{F})\Leftrightarrow a\in \beta_{w'}^{i}$.

$\mathbf{Case}$ 2: Assume that $s\leq i-1$ and $t\geq i+2$. Then
 \begin{align*}
M_{[s,t)}\notin \cim(\mathcal{F})&\Leftrightarrow M_{[s,i)},M_{[i+1,t)}\in \mathcal{F}~~~~&\text{(By~Proposition \ref{MM1}(4))}\\
&\Leftrightarrow(s~i),(i+1~t)\in\inv(w)~~~~&\text{(By Theorem \ref{EnT}(2))}\\
&\Leftrightarrow w=\cdots t\cdots i+1\cdots i\cdots s\cdots~~~~&\text{(By~Lemma \ref{m0})}\\
&\Leftrightarrow w'=\cdots t\cdots i\cdots i+1\cdots s\cdots.
\end{align*}
Since  $M_{[s,t)}\in\cim(R_{i}^{-}R_{i}^{+}(\mathcal{F}))$, by $(b)$, we have that
 $$w'=\cdots tl_{1}\cdots l_{x}s\cdots  \text{satisfying each}~l_{j}\notin[s,t].$$
Hence, we obtain that $M_{[s,t)}\in \cim(\mathcal{F})$.

$\mathbf{Case}$ 3: Assume that $s=i$ and $t\geq i+2$.  Then
\begin{align*}
M_{[s,t)}\notin \cim(\mathcal{F})&\Leftrightarrow M_{[i+1,t)}\in \mathcal{F}~~~~&\text{(By~Proposition \ref{MM1}(4))}\\
&\Leftrightarrow(i+1~t)\in\inv(w)~~~~&\text{(By Theorem \ref{EnT}(2))}\\
&\Leftrightarrow w=\cdots t\cdots i+1\cdots i\cdots ~~~~&\text{(By~Lemma \ref{m0})}\\
&\Leftrightarrow w'=\cdots t\cdots i\cdots i+1\cdots.
\end{align*}
Note that \begin{align*}
\gamma_{w'}^{i}&=\{t\geq i+2~|~w'=\cdots tl_{1}\cdots i\cdots l_{x} i+1\cdots  \text{satisfying~each}~l_{j}\notin[i+1~t]\}.
\end{align*}
Hence, for $b\geq i+2$, $M_{[i,b)}\in\cim(R_{i}^{-}R_{i}^{+}(\mathcal{F}))$ and $M_{[i,b)}\notin\cim(\mathcal{F})\Leftrightarrow b\in \gamma_{w'}^{i}$.

$(d)$ By $(b)$, $(c)$ and Proposition \ref{MM1}(3), we obtain that $$|\cim(\mathcal{F})|-|\cim(\mathcal{F}')|=1-(|\beta_{w'}^{i}|+|\gamma_{w'}^{i}|).$$
Therefore, we complete the proof.
\end{proof}
\subsection{(JHP) for torsion-free classes}
Given $w\in S_{n+1}$ and  $1\leq i<n$. Assume that $l(s_{i}w)=l(w)-1$. We define $\dag_{i}(w)$ and $\ddag_{i}(w)$ as follows:

$(1)$ If $i=1$, then
$$\dag_{i}(w):=\left\{\begin{aligned}
   0, &  & \text{if}~w(1)\neq 2, \\
   1, &  &\text{if}~w(1)=2
 \end{aligned}
  \right.$$
and
$$\ddag_{i}(w):=\left\{\begin{aligned}
   1-|\alpha_{s_{i}w}|, &  & \text{if}~w(1)\neq 2, \\
   1, &  &\text{if}~w(1)=2.
 \end{aligned}
  \right.$$
$(2)$ If $1<i<n$, then
$$\dag_{i}(w):=\left\{\begin{aligned}
   0, &  & \text{if}~\max\{s_{i}w(k)~|~k\leq i\}>i, \\
   1, &  &\text{if}~\max\{s_{i}w(k)~|~k\leq i\}\leq i
 \end{aligned}
  \right.$$
and
$$\ddag_{i}(w):=1-(|\beta_{s_{i}w}^{i}|+|\gamma_{s_{i}w}^{i}|).$$

\begin{lemma}\label{mm2} Let $Q$ be an $A_{n}$ type quiver and $1\leq i< n$ be a sink of $Q$. Let $w$ be a $c_{Q}$-sortable element. If $l(s_{i}w)=l(w)-1$, then
$$\dag_{i}(w)=|\supp(w)|-|\supp(s_{i}w)|$$
and
$$\ddag_{i}(w)=|\Binv(w)|-|\Binv(s_{i}w)|.$$
\end{lemma}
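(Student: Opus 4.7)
The plan is to reduce the statement directly to Lemmas \ref{m11} and \ref{m13}, since the quantities $\dag_i(w)$ and $\ddag_i(w)$ are defined in terms of $s_iw$, which coincides with the element $w'$ from the setup of those lemmas. First I would set $w' = s_iw$; since $l(s_iw) = l(w) - 1$, the preamble to Lemma \ref{m0} applies, giving $\mathcal{F}(w) = \mathcal{F}$ and $R_i^+\mathcal{F} = \mathcal{F}(w') = \mathcal{F}'$, with $w'$ $c_{Q'}$-sortable.

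Next I would translate both sides of the desired identities into the categorical language. By Theorem \ref{EnT}(3), $\supp(w) = \supp(\mathcal{F})$ and $\supp(w') = \supp(\mathcal{F}')$, so
\[
|\supp(w)| - |\supp(s_iw)| = |\supp(\mathcal{F})| - |\supp(\mathcal{F}')|.
\]
Similarly, by Theorem \ref{EnT}(2), the bijection $\Binv(w) \to \cim(\mathcal{F}(w))$ yields
\[
|\Binv(w)| - |\Binv(s_iw)| = |\cim(\mathcal{F})| - |\cim(\mathcal{F}')|.
\]

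Then I would split into the two cases according to the definition of $\dag_i$ and $\ddag_i$. For $i = 1$, Lemma \ref{m11} gives exactly the two alternatives: when $w(1) \neq 2$, both $|\supp(w)| = |\supp(w')|$ and $|\cim(\mathcal{F})| - |\cim(\mathcal{F}')| = 1 - |\alpha_{w'}|$, matching $\dag_1(w) = 0$ and $\ddag_1(w) = 1 - |\alpha_{s_1w}|$; when $w(1) = 2$, both differences equal $1$, matching $\dag_1(w) = \ddag_1(w) = 1$. For $1 < i < n$, Lemma \ref{m13}(1)--(2) translates the condition $\max\{w'(k) : k \leq i\} > i$ (respectively $\leq i$) into $|\supp(w)| - |\supp(w')|$ equal to $0$ or $1$, matching $\dag_i(w)$; and Lemma \ref{m13}(3) gives $|\cim(\mathcal{F})| - |\cim(\mathcal{F}')| = 1 - (|\beta_{w'}^i| + |\gamma_{w'}^i|)$, matching $\ddag_i(w)$ by definition.

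Since the argument is essentially a bookkeeping step, there is no real obstacle — the heavy lifting has been done in Lemmas \ref{m11} and \ref{m13}. The only thing to be careful about is the identification $w' = s_iw$, so that the symbols $\alpha_{w'}$, $\beta_{w'}^i$, $\gamma_{w'}^i$ appearing in those lemmas match the symbols $\alpha_{s_iw}$, $\beta_{s_iw}^i$, $\gamma_{s_iw}^i$ used in the definition of $\ddag_i(w)$; and analogously for the condition on $\max\{w'(k)\}$ versus $\max\{s_iw(k)\}$. Once this identification is made, both equalities follow immediately.
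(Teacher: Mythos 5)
Your proposal is correct and follows essentially the same route as the paper, which simply cites Theorem \ref{EnT}(2) together with Lemmas \ref{m11} and \ref{m13}. The one inessential detour is invoking Theorem \ref{EnT}(3) to translate $|\supp(w)|-|\supp(s_iw)|$ into $|\supp(\mathcal{F})|-|\supp(\mathcal{F}')|$: Lemmas \ref{m11} and \ref{m13} are already phrased directly in terms of $|\supp(w)|-|\supp(w')|$, so that step can be skipped; the genuine translation needed is the one you make via Theorem \ref{EnT}(2) for $|\Binv|$ versus $|\cim|$, using that $w$ is $c_Q$-sortable and $w'=s_iw$ is $c_{Q'}$-sortable.
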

\begin{proof} It follows from Theorem \ref{EnT}(2), Lemma \ref{m11} and Lemma \ref{m13}.
\end{proof}

Let $Q$ be any quiver of type $A_{n}$ and let $w$ be a $c_{Q}$-sortable element. Then there  exists $1\leq j_{1},\cdots, j_{s}\leq n-1$ such that $R^{+}_{j_{s}}\cdots R^{+}_{j_{1}}(Q)=1\rightarrow 2\rightarrow\cdots\rightarrow n$. We define the {\em reflection sequence} $j_{t_{1}},\cdots, j_{t_{x}}$ of $w$ such that
\begin{flalign*}&t_{1}=\min\{y~|~l(s_{j_{y}}w)=l(w)-1~\text{for}~1\leq y\leq s\}\\
&\vdots\\
&t_{x-1}=\min\{y~|~l(s_{j_{y}}s_{j_{t_{x}-2}}\cdots s_{j_{t_{1}}}w)=l(s_{j_{t_{x-2}}}\cdots s_{j_{t_{1}}}w)-1~\text{for}~t_{x-2}+1\leq y\leq s\}\\
&t_{x}=\min\{y~|~l(s_{j_{y}}s_{j_{t_{x-1}}}\cdots s_{j_{t_{1}}}w)=l(s_{j_{t_{x-1}}}\cdots s_{j_{t_{1}}}w)-1~\text{for}~t_{x-1}+1\leq y\leq s\}\end{flalign*}
and
$$l(s_{j_{y}}s_{j_{t_{x}}}\cdots s_{j_{t_{1}}}w)\neq l(s_{j_{t_{x}}}\cdots s_{j_{t_{1}}}w)-1~\text{for any}~t_{x}+1\leq y\leq s.$$
In this case, we set
$$\dag_{[j_{s}\cdots j_{1}]}(w):=\dag_{j_{t_{1}}}(w)+\cdots +\dag_{j_{t_{x}}}(s_{j_{t_{x-1}}}\cdots s_{j_{t_{1}}}w)$$
and
$$\ddag_{[j_{s}\cdots j_{1}]}(w):=\ddag_{j_{t_{1}}}(w)+\cdots +\ddag_{j_{t_{x}}}(s_{j_{t_{x-1}}}\cdots s_{j_{t_{1}}}w).$$
If there exits no reflection sequence of $w$, we set $\dag_{[j_{s}\cdots j_{1}]}(w)=\ddag_{[j_{s}\cdots j_{1}]}(w)=0$.

Now we are in position to present the main result of this section as the following
\begin{theorem}\label{main6} Let $Q$ be any quiver of type $A_{n}$ and let $w$ be a $c_{Q}$-sortable element. Take $1\leq j_{1},\cdots, j_{s}< n$ such that $R^{+}_{j_{s}}\cdots R^{+}_{j_{1}}(Q)=1\rightarrow 2\rightarrow\cdots\rightarrow n=:Q'$. Then $\mathcal{F}(w)$ satisfies  {\rm (JHP)} if and only if $\dag_{[j_{s}\cdots j_{1}]}(w)=\ddag_{[j_{s}\cdots j_{1}]}(w)$.
\end{theorem}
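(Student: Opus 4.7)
The plan is to reduce, via iterated reflection functors, to the linearly oriented situation handled by Theorem~\ref{EnT}(5). By Theorem~\ref{EnT}(4), $\mathcal{F}(w)$ satisfies (JHP) if and only if $|\supp(w)|=|\Binv(w)|$, so it will suffice to establish
\[|\supp(w)|-|\Binv(w)| \;=\; \dag_{[j_s\cdots j_1]}(w) - \ddag_{[j_s\cdots j_1]}(w).\]

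I will introduce the quiver sequence $Q_0=Q$, $Q_y=\mu_{j_y}(Q_{y-1})$ for $1\le y\le s$, and the element sequence $v_0=w$, $v_k=s_{j_{t_k}}v_{k-1}$ for $k=1,\ldots,x$. Combining the discussion preceding Lemma~\ref{m0} with Theorem~\ref{EnT}(1), an induction shows that each effective step sends a $c$-sortable element to a $c$-sortable element for the updated quiver, with $\mathcal{F}(v_k)$ obtained from $\mathcal{F}(v_{k-1})$ by composing the reflection functors $R^+_{j_y}$ for $t_{k-1}<y\le t_k$. Lemma~\ref{mm2} then reads off directly that $|\supp(v_{k-1})|-|\supp(v_k)|=\dag_{j_{t_k}}(v_{k-1})$ and $|\Binv(v_{k-1})|-|\Binv(v_k)|=\ddag_{j_{t_k}}(v_{k-1})$.

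For a skipped index $y\notin\{t_1,\ldots,t_x\}$, one has $l(s_{j_y}v_{k-1})=l(v_{k-1})+1$, so $S_{j_y}\notin\mathcal{F}(v_{k-1})$ in the current quiver. Here I will use Proposition~\ref{MM1}(3) together with the dimension-vector formula of Proposition~\ref{D11}(1) and the bijections of Theorem~\ref{EnT}(2)--(3) to conclude that the sortable element corresponding to $R^+_{j_y}(\mathcal{F}(v_{k-1}))$ carries the same $|\supp|$ and $|\Binv|$ as $v_{k-1}$. Telescoping across all $s$ quiver reflections and collecting the effective steps then yields
\[|\supp(w)|-|\supp(v_x)| = \dag_{[j_s\cdots j_1]}(w), \qquad |\Binv(w)|-|\Binv(v_x)| = \ddag_{[j_s\cdots j_1]}(w).\]
After all $s$ reflections the resulting torsion-free class sits in $\rep(Q')$, so Theorem~\ref{EnT}(5) combined with Theorem~\ref{EnT}(4) supplies $|\supp(v_x)|=|\Binv(v_x)|$. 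Subtracting the two displayed identities gives the desired equation, and invoking Theorem~\ref{EnT}(4) once more for $w$ completes the equivalence.

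The main obstacle I anticipate is the bookkeeping at the skipped indices: I must justify both that the numerical invariants $|\supp|$ and $|\Binv|$ remain unchanged when $R^+_{j_y}$ is applied to a torsion-free class not containing $S_{j_y}$, and that the iterative procedure ultimately produces data associated with a $c_{Q'}$-sortable element once $Q_s=Q'$. These technical points rest on Thomas's compatibility results between reflection functors and $c$-sortability, entering through Theorem~\ref{EnT}(1)--(2); the remainder of the argument is then a mechanical telescoping once Lemma~\ref{mm2} has done the per-step accounting.
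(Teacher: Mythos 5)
Your telescoping strategy---tracking the drops in $|\supp|$ and $|\Binv|$ across the effective reflections via Lemma~\ref{mm2}, then closing with Theorem~\ref{EnT}(4)--(5) once the quiver has become linearly oriented---is exactly the paper's argument. The problem lies entirely in how you frame the skipped indices, which you identify as your ``main obstacle'' and propose to dispatch by applying $R^+_{j_y}$ and proving that $|\supp|$ and $|\Binv|$ are preserved. That claim is false, and, more to the point, it is not needed.

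Concretely: if $j_y$ is a sink of the current quiver and $S_{j_y}\notin\mathcal{F}$, then $R^+_{j_y}(\mathcal{F})$ is in general neither $\mathcal{F}$ re-read in the new quiver nor even a torsion-free class. Take $Q=1\rightarrow 2\leftarrow 3$ and $\mathcal{F}=\add\{S_1,S_3\}$, with $S_2\notin\mathcal{F}$. By the dimension formula in Proposition~\ref{D11}(1) one gets $R^+_2(S_1)=M_{[1,3)}$ and $R^+_2(S_3)=M_{[2,4)}$ in $\rep(\mu_2(Q))$, so $R^+_2(\mathcal{F})=\add\{M_{[1,3)},M_{[2,4)}\}$. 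This subcategory of $\rep(1\leftarrow 2\rightarrow 3)$ is not closed under submodules (it omits $S_1$ and $S_3$), and its support is $\{1,2,3\}$ rather than $\{1,3\}$, so $|\supp|$ has jumped by one. Hence there is no $c$-sortable element $u$ with $\mathcal{F}(u)=R^+_{j_y}(\mathcal{F})$ to which you could apply Theorem~\ref{EnT}(2)--(3), and Proposition~\ref{MM1}(3) preserving $|\cim|$ does not rescue the step.

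What the paper actually does at a skipped $j_y$ is simpler: by Thomas's Theorem~4.1(1), the inequality $l(s_{j_y}v_{k-1})=l(v_{k-1})+1$ forces $j_y\notin\supp(v_{k-1})$, and $v_{k-1}$ is $s_{j_y}c$-sortable, hence $c_{\mu_{j_y}(Q)}$-sortable, with the permutation itself unchanged. Since $|\supp|$ and $|\Binv|$ are purely combinatorial invariants of the permutation, nothing happens at a skipped step, and no reflection functor is invoked. Once you replace your detour with this observation, the rest of your plan --- Lemma~\ref{mm2} at each $j_{t_k}$, the telescope $|\supp(w)|-|\supp(v_x)|=\dag_{[j_s\cdots j_1]}(w)$ and $|\Binv(w)|-|\Binv(v_x)|=\ddag_{[j_s\cdots j_1]}(w)$, then $|\supp(v_x)|=|\Binv(v_x)|$ from Theorem~\ref{EnT}(4)--(5) --- goes through and reproduces the paper's proof.
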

\begin{proof} We write $W_{<s_{i}>}$
for the Coxeter group generated by the reflections $s_{j}$ with $j\neq i$. For any sink $q$ of $Q$, $s_{q}c_{Q}$ is a Coxeter element in $W_{<s_{i}>}\cong S_{n}$. By \cite[Theorem 4.1(1)]{Th}, if $l(s_{q}w)=l(w)+1$, then $w$ is $s_{q}c_{Q}$-sortable; noting that $s_{q}c_{Q}s_{q}=c_{\mu_{q}(Q)}$, we obtain that $w$ is a $c_{\mu_{q}(Q)}$-sortable element. If $l(s_{q}w)=l(w)-1$, by \cite[Theorem 4.1(2)]{Th}, $s_{q}w$ is a $c_{\mu_{q}(Q)}$-sortable element. Hence, if there exists no reflection sequence of $w$, then $w$ is a $c_{Q'}$-sortable element and thus $\mathcal{F}(w)$ satisfies  {\rm (JHP)} by Theorem \ref{EnT}(5), in this case, $\dag_{[j_{s}\cdots j_{1}]}(u)=\ddag_{[j_{s}\cdots j_{1}]}(u)=0$.

Now we assume that the reflection sequence of $w$ exists, denoted by $j_{t_{1}},\cdots, j_{t_{x}}$. As mentioned above, we know that $s_{j_{t_{x}}}\cdots s_{j_{t_{1}}}w$ is a $c_{Q'}$-sortable element. By the proof of \cite[Proposition 4.5]{Th}, we have that  $R^{+}_{j_{t_{x}}}\cdots R^{+}_{j_{t_{1}}}(\mathcal{F}(w))=\mathcal{F}(s_{j_{t_{x}}}\cdots s_{j_{t_{1}}}w)$ is a torsion-free class in $\rep(Q')$. By Lemma \ref{mm2}, we get that
\begin{flalign*}&|\supp(w)|-|\supp(s_{j_{t_{x}}}\cdots s_{j_{t_{1}}}w)|\\&=|\supp(w)|-|\supp(s_{j_{t_{1}}}w)|+|\supp(s_{j_{t_{1}}}w)|\\&\quad-|\supp(s_{j_{t_{2}}}s_{j_{t_{1}}}w)|+|\supp(s_{j_{t_{2}}}s_{j_{t_{1}}}w)|-\cdots-|\supp(s_{j_{t_{x-1}}}\cdots s_{j_{t_{1}}}w)|\\&\quad+|\supp(s_{j_{t_{x-1}}}\cdots s_{j_{t_{1}}}w)|-|\supp(s_{j_{t_{x}}}\cdots s_{j_{t_{1}}}w)|\\
&=\dag_{j_{t_{1}}}(w) +\cdots +\dag_{j_{t_{x}}}(s_{j_{t_{x-1}}}\cdots s_{j_{t_{1}}}w)\\&=\dag_{[j_{s}\cdots j_{1}]}(w)\end{flalign*}
and similarly,
$$|\Binv(w)|-|\Binv(s_{j_{t_{x}}}\cdots s_{j_{t_{1}}}w)|=\ddag_{[j_{s}\cdots j_{1}]}(w).$$
Using Theorem \ref{EnT}(4-5), we obtain that $\mathcal{F}(w)$ satisfies  {\rm (JHP)} if and only if
$$\supp(w)-\supp(s_{j_{t_{x}}}\cdots s_{j_{t_{1}}}w)=\Binv(w)-\Binv(s_{j_{t_{x}}}\cdots s_{j_{t_{1}}}w)$$
if and only if $\dag_{[j_{s}\cdots j_{1}]}(u)=\ddag_{[j_{s}\cdots j_{1}]}(u)$. Therefore, we complete the proof.
\end{proof}

As a consequence, we can give a characterization of the torsion-free classes satisfying {\rm (JHP)} in a combinatorial way.
\begin{corollary}\label{BB} Let $Q$ be the quiver $1\leftarrow 2\rightarrow\cdots\rightarrow n$ and $w\in S_{n+1}$. Then $\mathcal{F}(w)$ is a torsion-free class satisfying {\rm (JHP)} if and only if the following conditions hold.
\begin{itemize}
  \item [(i)] $w\neq \cdots 2\cdots k\cdots 1\cdots$ for any $2<k<n+1$.
  \item [(ii)]  $w\neq \cdots k\cdots i\cdots j$ for any $j\in\{3,4,\cdots, n\}$ and $i<j< k$.
  \item [(iii)]  One of the following conditions holds:
  \begin{itemize}
  \item [(a)] $w=\cdots 1\cdots 2\cdots$.
  \item [(b)] If $w=\cdots 2\cdots 1\cdots$, then $w(1)=2$ or $w(1)\neq2$ and $|\alpha_{s_{1}w}|=1.$
  \end{itemize}
  \end{itemize}
\end{corollary}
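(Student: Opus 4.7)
The plan is to apply Theorem \ref{main6} to $Q : 1 \leftarrow 2 \to 3 \to \cdots \to n$, whose unique sink is vertex $1$. Since $\mu_1(Q) = 1 \to 2 \to \cdots \to n =: Q'$ is already linearly oriented, a single reflection suffices, and in the notation of Theorem \ref{main6} one takes $s = 1$ and $j_1 = 1$. Consequently the reflection sequence of $w$ is either the single index $1$ (when $l(s_1 w) = l(w) - 1$) or empty (when $l(s_1 w) = l(w) + 1$), and Theorem \ref{main6} reduces the (JHP) question to the single scalar equality $\dag_1(w) = \ddag_1(w)$ (vacuously true in the empty case).

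The bulk of the argument is a case split on the relative order of $1$ and $2$ in the one-line notation of $w$. If $w = \cdots 1 \cdots 2 \cdots$, then $(1~2) \notin \inv(w)$, so $l(s_1 w) = l(w) + 1$ and the reflection sequence is empty. Here $\dag_{[1]}(w) = \ddag_{[1]}(w) = 0$ holds trivially, so Theorem \ref{main6} delivers (JHP) automatically; this matches condition (iii)(a). If instead $w = \cdots 2 \cdots 1 \cdots$, then $l(s_1 w) = l(w) - 1$ and the reflection sequence is $(1)$. Invoking Lemma \ref{m11}: when $w(1) = 2$, both $\dag_1(w)$ and $\ddag_1(w)$ equal $1$ automatically; when $w(1) \neq 2$, $\dag_1(w) = 0$ while $\ddag_1(w) = 1 - |\alpha_{s_1 w}|$, so $\dag_{[1]}(w) = \ddag_{[1]}(w)$ forces $|\alpha_{s_1 w}| = 1$. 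Combined, these reproduce condition (iii)(b).

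Finally, I would show that conditions (i) and (ii) precisely characterize when $w$ is $c_Q$-sortable, equivalently by Theorem \ref{EnT}(1) when $\mathcal{F}(w)$ is a torsion-free class. For this I appeal to the sink-reflection recursion of \cite[Thm 4.1]{Th} already used inside Theorem \ref{main6}: $w$ is $c_Q$-sortable iff either $l(s_1 w) = l(w) + 1$ and $w$ is $c_{Q'}$-sortable in $W_{\langle s_1 \rangle}$, or $l(s_1 w) = l(w) - 1$ and $s_1 w$ is $c_{Q'}$-sortable in $S_{n+1}$. Since $Q'$ is linearly oriented, $c_{Q'}$-sortability admits a known pattern-avoidance description. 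Translating this pattern-avoidance back through the swap $w \leftrightarrow s_1 w$, the $312$-type obstruction on $s_1 w$ (or on $w$ itself in the first subcase) becomes condition (ii), while the obstruction arising from a third value sitting between the $2$ and the $1$ in $w$ becomes condition (i); the range restrictions $k, j \in \{3, \ldots, n\}$ precisely correspond to the two endpoints of $Q$, which play a special role because vertex $1$ is the branch and vertex $n$ is a source.

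The main obstacle will be this final step: carefully unwinding the recursive sortability criterion into the flat combinatorial conditions (i) and (ii), and verifying that the value restrictions $\{3, \ldots, n\}$ correctly delineate the genuinely forbidden patterns from those automatically excluded or harmless. Everything else is a direct consequence of Theorem \ref{main6} and Lemma \ref{m11}.
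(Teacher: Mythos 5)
Your handling of condition (iii) is essentially the same as the paper's: apply Theorem \ref{main6} with $j_1 = 1$, observe that the case $w = \cdots 1\cdots 2\cdots$ gives $l(s_1 w) = l(w)+1$ and an empty reflection sequence (so the equality $\dag = \ddag$ holds trivially), and that the case $w = \cdots 2\cdots 1\cdots$ gives $l(s_1 w) = l(w)-1$, where Lemma \ref{m11} and the definitions of $\dag_1, \ddag_1$ reduce $\dag_{[1]}(w)=\ddag_{[1]}(w)$ to exactly condition (iii)(b). (Incidentally, vertex $1$ is not the unique sink for $n\geq 3$ — vertex $n$ is also a sink — but this doesn't affect the argument since only $s_1$ is needed to linearize $Q$.)

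The genuine gap is in your treatment of conditions (i) and (ii). The paper does not derive these from the sink-reflection recursion at all: it simply cites \cite[Theorem 4.12]{Re}, which gives a direct pattern-avoidance description of $c_Q$-sortable elements (equivalently, via Theorem \ref{EnT}(1), of torsion-free classes $\mathcal{F}(w)$). Your plan — to pass through \cite[Thm.~4.1]{Th}, invoke the pattern-avoidance criterion for the linearly oriented quiver $Q'$, and then translate that criterion across the substitution $w \leftrightarrow s_1 w$ — is plausible in principle but you have not executed it, and you explicitly flag it as the "main obstacle." As it stands, this step is a sketch rather than a proof: you would need to state precisely which patterns characterize $c_{Q'}$-sortability (for both the parabolic-subgroup case $l(s_1w)=l(w)+1$ and the full-group case $l(s_1w)=l(w)-1$), carry out the letter-swap $1\leftrightarrow 2$ in the one-line notation, and verify that the result is exactly (i) and (ii) with the stated ranges for $j$ and $k$. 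Until that bookkeeping is done, the equivalence "$\mathcal{F}(w)$ is a torsion-free class $\iff$ (i) and (ii)" has no proof. The paper's citation of Reading's theorem is the missing ingredient you should either invoke or actually reproduce.
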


\begin{proof}By \cite[Theorem 4.12]{Re}, $\mathcal{F}(w)$ is a torsion-free class if and only if $w$ satisfies the conditions {\rm (i)} and {\rm(ii)}. Now, we assume that $\mathcal{F}(w)$ is a torsion-free class. If $w=\cdots 1\cdots 2\cdots$, then $S_{1}\notin\mathcal{F}(w)$. By the proof of Lemma \ref{m0}, we can obtain that $1\notin\supp(w)$. In this case, we get that $l(s_{1}w)=l(w)+1$ and thus $\dag_{[1]}(w)=\ddag_{[1]}(w)=0$.

If $w=\cdots 2\cdots 1\cdots$, by {\rm (i)}, $w=\cdots 21\cdots$ and $s_{1}w=\cdots 12\cdots$. Suppose that $(i~j)\in\inv(s_{1}w)$, then $s_{1}w=\cdots j\cdots i\cdots$. If $i=1$, then $j>2$ and thus $w=\cdots j\cdots 12\cdots$. If $i=2$, we  have that $w=\cdots j\cdots 21\cdots$. If $i>3$, it is clear that $w=\cdots j\cdots i\cdots$. Hence, $\inv(s_{1}w)\subseteq\inv(w)$. Since $l(w)=|\inv(w)|\geq |\inv(s_{1}w)|=l(s_{1}w)$, we obtain that  $l(s_{1}w)=l(w)-1$. By Theorem \ref{main6}, $\mathcal{F}(w)$ is a torsion-free class satisfying {\rm (JHP)} if and only if
$$\dag_{[1]}(w)=\dag_{1}(w)=|\supp(w)|-|\supp(s_1w)|=\ddag_{[1]}(w)=\ddag_{1}(w)=|\cim(\mathcal{F}(w))|-|\cim(\mathcal{F}(s_1w))|.$$
By Lemma \ref{m11}, we obtain that $\dag_{1}(w)=0\Leftrightarrow \ddag_{1}(w)=1-|\alpha_{s_1w}|\Leftrightarrow w(1)\neq2$, and $\dag_{1}(w)=1\Leftrightarrow \ddag_{1}(w)=1\Leftrightarrow w(1)=2$. This finishes the proof.
\end{proof}

We finish this section with a straightforward example illustrating Theorem \ref{main6}.
\begin{example} Let $Q$ be the quiver $1\rightarrow 2\leftarrow 3$ and $R^{+}_{1}R^{+}_{2}(Q)=1\rightarrow2\rightarrow3=:Q'$. Then $c_{Q}=s_{2}s_{1}s_{3}=s_{2}s_{3}s_{1}=3142.$
Take $w=s_{2}s_{3}s_{1}s_{2}=3412$ and it is a $c_{Q}$-sortable element. Since $s_{2}w=s_{3}s_{1}s_{2}=2413$ and $s_{1}s_{2}w=s_{3}s_{2}=1423$, we obtain that $2,1$ is a reflection sequence of $w$. By Lemma \ref{mm2}, it is easy to see that $\dag_{2}(w)=0$ and $\dag_{1}(s_{2}w)=1$. Recall that
$$\beta_{s_{2}w}^{2}=\{t\leq1~|~s_{2}w=\cdots 2l_{1}\cdots 3\cdots l_{x}t\cdots  \text{satisfying each}~l_{j}\notin[t,2]\}$$
and $$\gamma_{s_{2}w}^{2}=\{t\geq4~|~s_{2}w=\cdots tl_{1}\cdots 2\cdots l_{x} 3\cdots  \text{satisfying each}~l_{j}\notin[3,t]\},$$
which are clearly both empty.
Hence, $\ddag_{2}(w)=1-(|\beta_{s_{2}w}^{2}|+|\gamma_{s_{2}w}^{2}|)=1$. Since $(s_{2}w)(1)=2$, by definition, we have that $\ddag_{1}(s_{2}w)=1$. Hence, $\dag_{[12]}(w)=\dag_{2}(w)+\dag_{1}(s_{2}w)=1$ and $\ddag_{[12]}(w)=\ddag_{2}(w)+\ddag_{1}(s_{2}w)=2$. Therefore, by Theorem \ref{main6}, {\rm (JHP)} does not hold in $\mathcal{F}(w)$.

Let us list 14 $c_{Q}$-sortable elements and the corresponding reflection sequences, $c_{Q'}$-sortable elements, $\dag_{12}(w)$ and $\ddag_{12}(w)$ as follows:
$$
\begin{tabular}{|p{3.3cm}|p{3.5cm}|p{3.3cm}|c|p{1.5cm}|}
\hline
$c_{Q}$-sortable $w$ & Reflection sequence&$c_{Q'}$-sortable $w'$&$\dag_{[12]}(w)$ & $\ddag_{[12]}(w)$\\
\hline
e=1234& absence & e=1234&0 &0\\
\hline
$s_{1}$=2134& absence & $s_{1}$=2134& 0&0\\
\hline
$s_{2}$=1324&$2$ &e=1234 & 1&1\\
\hline
$s_{3}$=1243& absence&$s_{3}$=1243 &0 &0\\
\hline
$s_{1}s_{3}$=2143&$1$ &$s_{3}$=1243 & 1&1\\
\hline
$s_{1}s_{3}$=2143& $1$& $s_{3}$=1243& 1&1\\
\hline
$s_{2}s_{1}$=3124& $2$,$1$& e=1234& 2&2\\
\hline
$s_{2}s_{1}s_{2}$=3214& $2$,$1$& $s_{2}$=1324&1 &1\\
\hline
$s_{2}s_{3}s_{2}$=1432& $2$& $s_{3}s_{2}$=1423& 0&0\\
\hline
$s_{2}s_{3}s_{1}$=3142& $2$,$1$&$s_{3}=1243$ & 2&2\\
\hline
$s_{2}s_{3}s_{1}s_{2}$=3412&  $2$,$1$& $s_{3}s_{2}=1423$& 1&2\\
\hline
$s_{2}s_{3}s_{1}s_{2}s_{1}$=4312& $2$,$1$& $s_{3}s_{2}s_{1}=4123$ & 0&0\\
\hline
$s_{2}s_{3}s_{1}s_{2}s_{3}$=3421& $2$,$1$&$s_{3}s_{2}s_{3}=1432$ &1 &1\\
\hline
$s_{2}s_{3}s_{1}s_{2}s_{3}s_{1}$=4321& $2$,$1$&$s_{3}s_{2}s_{1}s_{3}=4123$ &0 &0\\
\hline
\end{tabular}
$$
Therefore, for all $c_{Q}$-sortable elements $w$ except 3412, the torsion-free classes $\mathcal{F}(w)$ satisfy {\rm (JHP)}.
\end{example}

%%%%%%%%%%%%%%%%%%%%%%%%%%%%%%%%%%%%%%%%%%%%%%%%%%%%%%%%%%%%%%%%%%%%%%%%%%%%%%%%%%%%%%%%%%%%%%%%%%%%%%%%%%%%%%%%%%%%%%%%%%%%

\end{document}